\newtheorem{proposition}{Proposition}[section]
\newtheorem{corollary}[proposition]{Corollary}
\newtheorem{theorem}[proposition]{Theorem}
\numberwithin{equation}{section}
\title{Time-Dependent Fluid-Structure Interaction}
\date{\today}
\author{{\sc George C. Hsiao}\thanks{Department of Mathematical Sciences, University of Delaware,
Newark, DE 19716-2553, USA, \quad
Email: {\tt hsiao@math.udel.edu}} \quad
{\sc  Francisco-Javier Sayas} \thanks{Department of Mathematical Sciences, University of Delaware,
Newark, DE 19716-2553, USA,\quad
 Email: {\tt fjsayas@math.udel.edu}}\quad
{\sc  Richard J. Weinacht} \thanks{Department of Mathematical Sciences, University of Delaware,
Newark, DE 19716-2553, USA, \quad
Email: {\tt weinacht@math.udel.edu}.}}
\begin{document}
\maketitle

\begin{abstract}
\noindent 

The problem of determining the manner in which an incoming acoustic wave is scattered by an elastic body immersed in a fluid is one of central importance in detecting and identifying submerged objects.  The problem is generally referred  to as a fluid-structure interaction and is  mathematically formulated as a time-dependent transmission problem.  In  this paper,  we 
consider a typical fluid-structure interaction problem by  using a coupling procedure which  
reduces  the problem to a nonlocal initial-boundary problem in the elastic  body with a system of  integral equations on the interface between the  domains occupied  by the elastic body and the fluid. We analyze this nonlocal problem by the Lubich  approach via the Laplace transform,  an essential  feature of which is that it works directly on data in the time domain rather than in the transformed domain.  Our results may serve as a mathematical foundation for  treating  time-dependent 
fluid-structure interaction problems by  convolution quadrature  coupling  of FEM
and  BEM. 
\end{abstract}

{\bf key words}: Fluid-structure interaction,  Coupling procedure,  Kirchhoff representation formula, 
Retarded potential, Laplace transform,  Boundary integral equation,  Variational formulation, Sobolev space.

\noindent
{\bf Mathematics Subject Classifications(1991)}: 35J20, 35L05, 45P05, 65N30.65N38

\section{Introduction} 
The problem of determining the manner in which an incoming acoustic wave is scattered by an elastic body immersed in a fluid is one of central importance in detecting and identifying submerged objects.  The problem is generally referred  to  as a fluid-structure interaction and is  mathematically formulated as an initial-boundary  transmission problem.   However,  most  of the investigations  study typical fluid-structure interaction problems confined to  the
 time-harmonic setting;  various  numerical methods, sometimes competitive, sometimes complementary,  have been developed.  In this regard, the governing system of partial differential equations is usually replaced by integral equations and it is these formulations upon which most numerical approximations are based. The acoustic equation is replaced by a boundary integral equation while the elastic body is treated in various ways; sometimes using an integral equation, either a  boundary or domain equation, or  alternatively using a weak or variational formulation leading to finite element approximations  
(see e.g., \cite{HJ:1985, HKS:1988, EvHe:1990, BiMa:1991, LuMa:1995,  Sc:2008},  to name a few). 

In  this paper,  we study and analyze a typical fluid-structure interaction problem in the time domain. Motivated by the time-harmonic fluid-structure interaction problems, we apply  a coupling procedure which is a combination of  a field equation and a boundary  integral equation. The essence of the procedure is to reduce the problem to a nonlocal initial- boundary problem in the elastic  body with integral equations on the interface between the  domains occupied  by the elastic body and the fluid.  However, in contrast to the time-harmonic setting,  the integral equations which are derived from the Kirchhoff formula are not only nonlocal in space but also nonlocal in time. This makes the analysis complicated,  in particular with respect to the choice of  appropriate solution function spaces.  To circumvent   this difficulty, we analyze this nonlocal  initial-boundary problem by the Lubich  approach via the Laplace transform as in  \cite{LaSa:2009b, Sa:2012}. The Lubich approach has been employed in the development of  numerical approximations  for  some fluid-structure interaction problems in the engineering literature (see, e.g., \cite{EsAn:1991, Sch:2001, PeBe:2009}),  but no rigorous justifications are provided. 

The paper is organized as follows:   In the next section, we start with the formulation of the problem
as an initial-boundary transmission problem and reduce it to a nonlocal initial-boundary  problem.
In Section 3,   we give a brief review of the Lubich  approach  and introduce the appropriate classes of operators from \cite{LaSa:2009a} and state and prove the crucial result concerning the inversion of the Laplace transform of the classes of operators introduced in this section.  Section 4 deals with the variational formulation of the nonlocal initial- boundary problem in Section 2.  Theorems 4.1 and 4.2 are the main existence and uniqueness results. The last section, Section 5,  states the main results  in the time domain. 
 
\section{Formulation of the problem}
\subsection{An initial -boundary transmission problem}
We are concerned with a time-dependent direct scattering problem in fluid-structure interaction, which can be simply described as follows: an acoustic wave propagates in a
fluid domain of infinite extent in which a bounded elastic body is
immersed. The problem is to determine the scattered
pressure and velocity fields in the fluid domain as well as the
displacement field in the elastic body at any time.  Throughout the paper, let $\Omega$ be the bounded domain in $\mathbb{R}^3$ occupied by the elastic body with a Lipschitz boundary $\Gamma$ and let  $\Omega^c = \mathbb{R}^3  \setminus  \overline\Omega$  its exterior occupied by a compressible fluid.  In the elastic domain $\Omega$, the  elastic displacement $\mathbf{u}(x,t)$ is governed by the 
dynamic linear elastic equation:
\begin{equation}\label{eq:2.1}
\rho_e \frac{\partial^2\mathbf{u}} {\partial t^2} - \Delta^{*} \mathbf{u} = \mathbf{0},  \quad (x,t) \in 
\Omega \times (0,T),
\end{equation}
where $T$ is a given positive constant and 
where $\rho_e$ is the constant density of the elastic body, and $\Delta^*$ is the Lam\'e operator 
\begin{eqnarray*}
 \Delta^* \mathbf{u} &:=&  \mu \Delta \mathbf{u} + (\lambda + \mu)  \nabla~div \, \mathbf{u}\\
 &=& div \; \bm{\sigma}(\bf u).
 \end{eqnarray*}
 Here $\bm {\sigma}({\bf u})$ and $\bm \varepsilon({\bf u}) $ are  the stress  and strain tensors, 
 respectively,
 $$ \bm \sigma({\bf u}) =(\lambda\;div\; {\bf u}){\bf I} + 2 \mu \bm \varepsilon({\bf u}) \quad \mbox{and }\quad
 \quad \bm \varepsilon({\bf u}) = \frac{1}{2}( \nabla{\bf u} + (\nabla{\bf u})^\prime) .$$
We assume that the elastic body is homogeneous  and  isotropic  with $\mu$ and $\lambda$  the 
corresponding Lam\'e constants, which are required to satisfy the constraints: $ \mu \geq 0,$ and  $3\lambda + 2 \mu \geq 0.$ 

In the  fluid domain  $\Omega^c$,  we consider a barotropic flow of an inviscid and compressible  fluid.  Let {\bf v}={\bf v} (x,t) be the velocity field,  and $p=p(x,t)$ and  $\rho= \rho(x,t)$ be respectively the pressure and the density of the fluid. We assume that ${\bf v},
 p $ and  $\rho$  are small perturbations of the static state ${ \bf v_0} = {\bf 0}, 
p_0 = constant $ and $ \rho_0 = constant. $  Then the  governing equations
may be linearized to yield  the linearized Euler equation
\begin{eqnarray}\label{eq:2.2} 
\rho_0\frac{\partial {\bf v}}{\partial  t} +  \nabla\; p &=&{ \bf 0}, 
\end {eqnarray}
the linearized equation of continuity 
\begin{eqnarray}\label{eq:2.3}
\frac{\partial \rho}{\partial t} + \rho_0 \, div \, {\bf v} &=& 0,
\end{eqnarray}
and the linearizecd state equation
\begin{eqnarray}\label{eq:2.4}  
p = c^2 \rho
\end{eqnarray}
in $\Omega^c \times (0, T)$, where $c$ is the sound speed defined by  
$ c^2 = f '( \rho_0) $ and $f $ is a function depending on the nature of the fluid (see e.g., \cite{Ac:1990, Se:1958}).

For an irrotational flow, this  formulation can be simplified in terms of a velocity potential $\varphi = \varphi(x,t)$  such that 
$$
{\bf v} =  - \nabla\; \varphi, \quad\mbox{and}\quad 
p= {\rho_0} \frac{\partial \varphi}{\partial t}. 
$$ 
Then it follows from (\ref{eq:2.3}) and (\ref{eq:2.4}),  the velocity potential $\varphi$ satisfies the wave equation 
\begin{equation}\label{eq:2.5}
\frac{\partial^2 \varphi}{\partial  t^2} - c^2 \Delta \varphi = 0 \quad\mbox{in}\quad \Omega^c \times (0, T).
\end{equation}
The time-dependent scattering problem can be formulated as {\em an initial-boundary transmission problem} consisting of  the partial differential equation (\ref{eq:2.1}) for the elastic displacement field  $\bf u$ and (\ref{eq:2.5}) for velocity 
potential $\varphi$ together with the homogeneous initial conditions 
\begin{equation}\label{eq:2.6}
 {\bf u}(x,0) = \frac{\partial {\bf u}(x,0)}{\partial t} = {\bf 0},\;  x \in \Omega \quad \mbox{and}
 \quad \varphi (x,0) = \frac{\partial \varphi} {\partial t} (x,0) = 0,\; x\in \Omega^c
  \end{equation}
 and the transmission conditions  on $\Gamma \times (0, T]$
 \begin{eqnarray}
 \bm \sigma({\bf u})^- {\bf n} &= & - {\rho_0}\;( \frac{\partial \varphi}{\partial t} + 
  \frac{\partial \varphi^{inc}}{\partial t})^+{\bf n},  \label{eq:2.7}\\
 \frac{ \partial {\bf u}^-} {\partial t} \cdot {\bf n}& = &- (\frac{\partial  \varphi }{\partial n} +
  \frac{\partial  \varphi^{inc}}{\partial n})^+ , \label{eq:2.8}
 \end{eqnarray}
where ${\bf n}$ is the exterior unit normal for $\Omega$, and $\varphi^{inc} $ denotes the given incident field. 
Here and in the sequel, we adopt the notation that $q^{\mp}$ denotes the limit of the function $q$ 
on $\Gamma$ from inside and outside, respectively. 
\subsection{Reduction to a nonlocal initial- boundary problem}
Motivated by  time-harmonic   fluid-structure interaction problems \cite{HKR:2000}, we intend to apply the coupling of boundary integral and field equation methods to the 
 transmission  problem defined by (\ref{eq:2.1}) and  (\ref{eq:2.5} ) together with 
 (\ref{eq:2.6}) --(\ref{eq:2.8}).  The main idea here is  to convert the problem to a nonlocal problem 
 in a bounded computational domain such as $\Omega$  by a reduction of the solution in the fluid domain to appropriate boundary integral equations on the interface boundary $\Gamma$. 
 For the solution of the wave equation (\ref{eq:2.5}), we begin with the Kirchhoff formula (see e.g., \cite{HsWe:2011a, LaSa:2009a})
 \begin{equation}  \label{eq:REP}
 \varphi  = \mathcal{D} * \phi - \mathcal{S} *\lambda \quad  \mbox{in } \Omega^c \times (0, T), 
\end{equation}
where $\phi:=\varphi^+$ and $\lambda:=\partial \varphi^+/ \partial n$ are the Cauchy data of $\varphi$ on $\Gamma$ and
\begin{eqnarray}
(\mathcal{S} * \lambda)(x,t) &:=& \int_\Gamma E(x,y)  \lambda(y,t-\frac{|x-y|}{c} d\Gamma_y  \label{eq:2.10}\\
(\mathcal{D}* \phi)(x,t)  &:=& -\int_\Gamma \nabla_x \Big(E(x,y) \phi (y,t-\frac{|x-y|}{c}\Big) \cdot {\bf n}_y d\Gamma_y,  \label{eq:2.11}\\
&=&   \int_{\Gamma} \Big\{\frac{\partial E(x,y)}{\partial n_y } \phi(y,t-\frac{|x-y|}{c})  -\frac{1}{c} E(x,y) 
 \frac{\partial |x-y|}{\partial n_y}\phi(y,t-\frac{|x-y|}{c})\Big\}d\Gamma_y, \nonumber\\ 
 \label{eq:2.12}
\end{eqnarray}
are the retarded simple and double layer potentials, written in terms of the fundamental solution of the three dimensional Laplacian $E(x,y)=1/(4\pi |x-y|)$.
One may show  as in classical potential theory \cite{HsWe:2008} that  the Cauchy data $\phi$ and $\lambda$ at  smooth points  of $\Gamma$ are related   by the system of boundary integral equations form the form (see, e.g., \cite{ BaHa:1986a, BaHa:1986b, Co:2004,  Ha:1989})
\begin{eqnarray} 
\phi  &=&\Big( \frac{1}{2}\phi  + \mathcal{K} * \phi  \Big)  - \mathcal{V}* \lambda 
 \quad \mbox{on } \Gamma\times (0,T]  \label{eq:2.13},\\
\lambda  &=&- \mathcal{W} * \phi + \Big(\frac{1}{2} \lambda  - \mathcal{K}^{\prime} *\lambda  \Big)  \quad \mbox{on }\Gamma \times (0,T]\label{eq:2.14}.
\end{eqnarray}
The four retarded integral operators in \eqref{eq:2.13} and \eqref{eq:2.14} are called (in the order they appear in the formulas) double layer, simple layer, hypersingular, and transpose double layer operators. For instance, the explicit formulas for the operators in \eqref{eq:2.13} is
\begin{eqnarray}
(\mathcal{V} *\lambda)(x,t) &:=& \int_{\Gamma} E(x,y) \lambda (y,t-\frac{|x-y|}{c}) d\Gamma_y,  \label{eq:2.15}  \\
(\mathcal{K} *\phi)(x,t)  &:=&  \int_{\Gamma} \frac{\partial E(x,y)}{\partial n_y} \phi(y,t-\frac{|x-y|}{c}) d\Gamma_y -  
\frac{1}{c}(\mathcal{V}_r * \phi_t)(x,t)
\end{eqnarray}
where
\[
( \mathcal{V}_r *\psi)(x,t):= \int_{\Gamma}E(x,y) \frac{\partial}{\partial n_y}|x-y|
\psi(y, t -\frac{|x-y|}{c}) d\Gamma_y.
\]
The operator matrix 
defined by  the right-hand side of (\ref{eq:2.13}) and (\ref{eq:2.14}) resembles the familiar form of the Calder\'on  projector for the Laplacian in  potential theory (see e.g., \cite{HsWe:2008}). 

 In view of the transmission condition  (\ref{eq:2.8}),  we make a  substitution:
$$
\lambda = - \big( \frac{ \partial {\bf u}^-} {\partial t} \cdot {\bf n}   
+ \frac{\partial  \varphi^{inc +}}{\partial n} \big)
$$
in (\ref{eq:2.13}). This leads to the following nonlocal boundary problem  reads : Given $\varphi^{inc}$, find ${\bf u}$ in $\Omega \times (0,T] $ and 
$ \phi$ on $\Gamma \times (-\infty,T]$ satisfying  the following equations and conditions :
\begin{gather}
 \rho_e \frac{\partial^2\mathbf{u}} {\partial t^2} - \Delta^{*} \mathbf{u} = \mathbf{0},  \quad \mbox{in } 
\Omega \times (0,T),  \nonumber\\
 \bm \sigma({\bf u})^- {\bf n} =  - {\rho_0}\;( \phi_t + 
  \frac{\partial \varphi^{inc\;+}}{\partial t}){\bf n}\quad \mbox{on} \quad \Gamma \times (0,T], 
  \nonumber \\
 {\bf u}(x,0) = \frac{\partial {\bf u}(x,0)}{\partial t} = {\bf 0},\;  x \in \Omega,
\nonumber \\
 \quad \phi (x,t) =  0,\; x\in \Gamma, t \le 0, \nonumber \\
- \frac{1}{2}\phi + \mathcal K*\phi  +  \mathcal{V} * (\frac{ \partial {\bf u}^-} {\partial t}\cdot {\bf n_y})
=  - \mathcal{V} * \frac{\partial  \varphi^{inc +}}{\partial n}  \quad \mbox{on } \Gamma\times (0,T]. 
\label{eq:2.17}
\end{gather}
Note that the initial condition for $\phi$ has to be stated for negative values of $t$, given the fact that delays appear in the definition of the retarded integral operators. 
We may also replace (\ref{eq:2.17}) by (\ref{eq:2.13}) and (\ref{eq:2.14}) in the form:
\begin{gather} 
\Big( \frac{1}{2}\phi  - \mathcal{K}* \phi  \Big) + \mathcal{V} * \lambda  =0 
 \quad \mbox{on } \Gamma\times (0,T]  \label{eq:2.18},\\
-  \frac{ \partial {\bf u}^-} {\partial t} \cdot {\bf n}  + \mathcal{W} *\phi - \Big(\frac{1}{2} \lambda  - \mathcal{K}^{\prime}*\lambda \Big) 
= \frac{\partial  \varphi^{inc +}}{\partial n}  \quad \mbox{on } \Gamma \times (0,T]\label{eq:2.19}.
\end{gather}
In this case,  ${\bf u}$ in $\Omega \times (0,T)$,  
$ \phi$ and $\lambda$  on $\Gamma \times (-\infty,T]$ are the unknown for the solutions of the nonlocal boundary problem. In view of the definition of the boundary integral operators of 
$\mathcal{K}^\prime$ and $\mathcal{W}$, in addition to the homogeneous initial condition for $\phi$, we require that $ {\phi_t} (x,t) = 0$ and $\lambda(x,t) = 0,  \; x\in \Gamma, t\le 0$.

We notice that in the above formulations,   equations (\ref{eq:2.17}), (\ref{eq:2.18})
 and (\ref{eq:2.19}) are all nonlocal differential boundary integral equations. They are not only nonlocal in space but also nonlocal in time.  As pointed out in \cite{Co:2004},  it is not clear how to choose appropriate function spaces because of the retarded argument.  On the other hand,  it is known that  for the long time behavior of the solution, one may replace the nonlocal differential boundary  integral equation by an  appropriate approximated transparent  boundary condition (see, e.g. \cite{HsWe:2011a}). However, in general we prefer to employ the approach originally introduced by  Lubich in his study of convolution quadrature techniques for hyperbolic problems \cite{Lu:1994} (see also \cite{LuSc:1992} in the parabolic case). This approach has been  recently extended  systematically to  treating retarded potentials by Laliena and Sayas \cite{LaSa:2009a} by means  of   properties of  the operators in the frequency domain.  We remark  that  we will see  this technique  does not mean we are solving the problems in the transformed domain and then  applying  the inverse  Laplace transform to obtain the solutions in the time domain.  To illustrate the essence of this concept, in the next section  we begin with some preliminary results concerning the Laplace transforms of 
functions and operators with causality properties. 
\section{ Lubich's approach} 
In this section, we give a brief review of the Lubich approach for treating time dependent boundary integral equations which has been advanced  by the work of  Laliena and Sayas. The presentation of this section follows their work in \cite{LaSa:2009b}. 

\subsection{The Laplace transform}
We consider the Laplace transform for causal distributions or operator-valued functions. Throughout the paper let the complex  plane be denoted  by $\mathbb{C}$ and its positive half-plane denoted by 
$$\mathbb{C}_+:= \{ s \in \mathbb{C} : Re~s > 0\}. $$
We begin with the Laplace transform for an ordinary complex-valued function. Let $f \colon [0, \infty) \to \mathbb{C}$ be a complex-valued function with limited growth at infinity.  The  Laplace transform of $f$ is defined by 
$$F(s)= \mathcal{L}\{f\}(s) := \int_0^\infty e^{-st} f(t) dt. $$  
A common criterion for limited growth at infinity is that $f$ be of exponential  order, but this is much too restrictive for the kind of problems we are treating here.
As in \cite{DaLi:1992, LaSa:2009b} one can define the Laplace transform for the case of causal continuous  linear maps  $f \colon \mathcal{S}(\mathbb{R}) \to X $ with limited growth at infinity  which  concept is defined as fellows for  tempered distributions defined on  $\mathcal{S}(\mathbb{R})$ with values in a complex  Banach space $X$. Here causal is taken as in the sense of $\mathcal{S}' (\mathbb{R})$ that $\langle f, \phi\rangle$ is zero element of $X$ for every $\phi$ in $\mathcal{S} (\mathbb{R})$ with support in $[0, \infty)$. Indeed 
for fixed $a$ and $b$ such that $-\infty < b <a <0 $ and a $C^{\infty}(\mathbb{R})$  function $\alpha$ which vanishes identically for $t \leq b$ and is identically equal to   $1$  for $t \geq a $ the function $\phi^*$, 
$$ \phi^* (t) := \alpha(t) e^{-st}$$
is in $\mathcal{S}(\mathbb{R}) $. Thus we define the Laplace transform of the casual continuous linear map $f$ as 
$$ 
\mathcal{L}\{f\}(s) := \langle f, \phi^*\rangle.$$
Moreover, this definition is independent of the choice of $a$ and $b$. 

We also want to consider the Laplace transform  of  a causal operator-valued function
$f \colon [0, \infty) \to {L}(X,Y)$,  where  $X$ and $Y$ are two complex Hilbert spaces and 
  $L(X,Y)$ is  the space of bounded linear operators from $X$ to $Y$.    The Laplace transform of $f$  is  defined by Bochner's integral
$$F(s)= \mathcal{L}\{f\}(s):= \int_0^\infty e^{-st} f(t) dt, $$  
if the integral exists.   We assume that $F(s)$ exists for all $s\in \mathbb{C}_+$ and decays fast enough at infinity so that inversion formula 
$$ f(t) = \frac{1}{2 \pi i} \int_{\sigma - i \infty}^{\sigma + i \infty} e^{st} F(s) ds $$
holds for all $\sigma = Re~ s > 0$. Now let $g\colon [0, \infty)  \to  X$ and let 
$$ (f*g) (t) := \int_0^t f(\tau) g(t-\tau) d \tau$$ 
denote the convolution  $f * g \colon [0, \infty)  \to Y$.   For appropriate  $f$ and $g$, 
we see that formally 
\begin{eqnarray}  
(f*g) (t)& :=& \int_0^t f(\tau) g(t-\tau) d \tau\nonumber\\
 &=& \int_0^t \Big(\frac{1}{2\pi i} \int_{\sigma - i \infty}^{\sigma + i \infty} e^{s \tau} F(s) ds\Big)
 g(t - \tau) d \tau \nonumber \\
 &=& \frac{1}{2 \pi i} \int_{\sigma - i \infty}^{\sigma + i \infty} F(s) \Big( \int_0^t e^{s\tau} g( t- \tau) \Big) ds, \label{eq:3.1}
\end{eqnarray} 
provided the changing orders of integrations can be adjusted by Fubini's theorem.  In fact, as we will see,  the relation in (\ref{eq:3.1}) is the essential idea  behind  the convolution  quadrature method introduced by Lubich since the late 80's   and has been employed for  treating time-dependent boundary integral equations in the early 90's (see, e.g., \cite{LuSc:1992, Lu:1994}).  We note that in the relation 
(\ref{eq:3.1}), the property of the convolution integral  $f*g$ in the time domain  depends upon  g in the time domain but on $f$  only in the transformed domain. The latter is more accessible.   In  the following, we  shall summarize some of the results in \cite{LaSa:2009b}) concerning  the precise conditions for the class of operators and functions for which relation (\ref{eq:3.1}) holds.  We begin with classes of operators.

\subsection{Classes of operators $\mathcal{A}(\mu, X,Y)$ and $\mathcal{E}(\mu, \theta, X)$ } 
\begin{itemize} 
\item {$\mathcal{A}(\mu, X,Y)$ }: For a given $\mu \in \mathbb{R} $, the elements of the class 
 $\mathcal{A}(\mu, X,Y)$ are the analytic functions $F\colon \mathbb{C}_+ \to L(X,Y)$ for which 
 there exists a real number $\mu$ such that for all $\sigma > 0$ there is $C_0(\sigma)$ such that 
 $$ ||F(s)|| \leq C_0(\sigma) |s|^{\mu}, \;\; \forall s \; \; s.t \; \; Re  ~ s > \sigma. $$
\item{$\mathcal{E}(\mu, \theta, X)$}: For given $\mu \in \mathbb{R}$ and a function
  $\theta \colon \mathbb{C}_+  \to  \mathbb{R}$,  we write $F \in \mathcal{E}(\mu, \theta, X) $  when 
$F  \colon \mathbb{C}_+ \to L(X,X^\prime) $ is analytic (where $X^\prime$ is the dual of $X$), and there exists a non-decreasing function 
 $c \colon (0, \infty) \to (0, \infty) $ such that 
 $$ Re~ \Big( e^{i \theta(s)} <F(s) \psi, \overline\psi>\Big) \geq \frac{c(Re~s)}{|s|^\mu} ||\psi||^2, \;\;\forall \psi \in X, \;\;\forall s \in \mathbb{C}_+. $$
 \end{itemize} 
The following theorems and  proposition are stated in \cite{LaSa:2009b}. The detailed proof of the first two appears in \cite{Sa:2012}, while the last one is a simple consequence of the Lax-Milgram lemma.

\begin{theorem} let $F \in \mathcal{A}(\mu, X, Y) $ with $\mu < -1$. Then there exists a continuous function $f \colon \mathbb{R} \to L(X,Y)$ such that supp $f \subset [0, \infty)$ such that its Laplace transform, defined in $\mathbb{C}_+$, is $F$. If $\mu < -k -1 $ with $k$ positive integer, then 
$f \in C^k (\mathbb{R}, L(X, Y)) $.
\end{theorem}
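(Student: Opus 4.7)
My plan is to recover $f$ explicitly from $F$ via the Bromwich-type formula
\[
 f(t) := \frac{1}{2\pi i}\int_{\sigma - i\infty}^{\sigma + i\infty} e^{st} F(s)\, ds, \qquad \sigma > 0,
\]
interpreted as a Bochner integral with values in $L(X,Y)$. First I would verify that the integral converges absolutely: on the vertical line $s = \sigma + i\tau$ one has $\|e^{st}F(s)\| \le e^{\sigma t} C_0(\sigma)(\sigma^2+\tau^2)^{\mu/2}$, and since $\mu < -1$ the change of variable $u = \tau/\sigma$ shows $\int_{\mathbb R}(\sigma^2+\tau^2)^{\mu/2}\,d\tau = \sigma^{\mu+1} K_\mu$ with $K_\mu = \int_{\mathbb R}(1+u^2)^{\mu/2}du < \infty$. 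This gives the pointwise bound
\[
\|f(t)\| \le \tfrac{1}{2\pi}\, C_0(\sigma)\, \sigma^{\mu+1} K_\mu\, e^{\sigma t},
\]
which I will use repeatedly.

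Next I would establish the structural properties of $f$. Independence of the abscissa $\sigma>0$ follows from analyticity of $F$ on $\mathbb C_+$ together with a standard rectangular contour argument: on the horizontal segments joining $\sigma_0 \pm iT$ to $\sigma_1 \pm iT$ the integrand is $O(T^\mu)$ and vanishes as $T\to\infty$ because $\mu<-1$. Continuity of $f$ on $\mathbb R$ is obtained by dominated convergence, since for $t$ confined to any bounded interval $[-M,M]$ the integrand is dominated by $C_0(\sigma)e^{\sigma M}(\sigma^2+\tau^2)^{\mu/2}$, which is integrable and independent of $t$. Causality, $\mathrm{supp}\, f\subset [0,\infty)$, is established by fixing $t<0$ in the displayed bound above and letting $\sigma\to\infty$: the factor $e^{\sigma t}$ decays exponentially while $C_0(\sigma)\sigma^{\mu+1}K_\mu$ grows at most polynomially (any growth of $C_0$ is beaten by $e^{\sigma t}$ with $t<0$), so $\|f(t)\|=0$.

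To confirm that $F$ is the Laplace transform of the $f$ I constructed, I would fix $s_0$ with $\mathrm{Re}\, s_0 > \sigma$, compute
\[
\int_0^{\infty} e^{-s_0 t} f(t)\, dt = \frac{1}{2\pi i}\int_{\sigma - i\infty}^{\sigma + i\infty} F(s) \int_0^{\infty} e^{(s-s_0)t}\, dt\, ds = \frac{1}{2\pi i}\int_{\sigma - i\infty}^{\sigma + i\infty}\frac{F(s)}{s_0 - s}\, ds,
\]
where Fubini applies because the joint integrand is absolutely integrable. Closing the contour by a large semicircle in the half-plane $\mathrm{Re}\, s > \sigma$ (on which the integrand decays thanks to $|s|^\mu$ with $\mu<-1$) picks up the single simple pole at $s = s_0$ with residue $-F(s_0)$, giving $\mathcal{L}\{f\}(s_0) = F(s_0)$.

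Finally, for the regularity statement, suppose $\mu < -k-1$. Formal differentiation under the integral yields
\[
f^{(j)}(t) = \frac{1}{2\pi i}\int_{\sigma - i\infty}^{\sigma + i\infty} s^j e^{st} F(s)\, ds, \qquad j = 0, 1, \ldots, k,
\]
and the new integrand has norm bounded by $C_0(\sigma) e^{\sigma t}|s|^{\mu+j}$ with $\mu + j \le \mu + k < -1$, so the integrals converge absolutely and uniformly on compact $t$-intervals. This justifies differentiation under the integral and gives $f\in C^k(\mathbb R, L(X,Y))$ by the same continuity argument as before. The main obstacle in my view is the interchange step used to identify $\mathcal L\{f\}=F$: one has to be careful that the Fubini/contour-closing argument is valid in the Bochner sense for operator-valued analytic $F$, which is where the hypothesis $\mu<-1$ is essential both for absolute integrability on the vertical line and for the vanishing of the semicircular arcs.
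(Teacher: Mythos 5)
Your proposal is correct: the Bromwich inversion formula, the absolute-convergence estimate $\int_{\mathbb R}(\sigma^2+\tau^2)^{\mu/2}\,d\tau<\infty$ from $\mu<-1$, causality by sending $\sigma\to\infty$ against $e^{\sigma t}$ with $t<0$, and the extra factor $s^{j}$ for the $C^k$ statement are exactly the mechanism the paper itself uses in \S 3.3 (the computation (3.3) leading to Proposition 3.4, which is the $X$-valued analogue of this theorem). The paper does not reproduce a proof of this particular theorem, deferring instead to \cite{Sa:2012}, but your argument is the standard one given there, so no further comparison is needed.
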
 
\begin{theorem}
Let  $ F \in \mathcal{A}(\mu, X, Y) $  with $\mu  \geq  1 .$ Take $k$ such that $1 + \mu < k \leq 2 + \mu$. Then there exits $g \in C(\mathbb{R}, L(X,Y))$ such that $supp \;g \subset [0, \infty)$ and $F$ is the 
Laplace transform of $ g^{(k)}$  in $ \mathbb{C}_+$, where the derivative is understood in the sense of distributions in $\mathbb{R}$.
\end{theorem}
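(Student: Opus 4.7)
The plan is to reduce to the previous theorem (the case $\mu<-1$) by dividing $F$ by a sufficiently high power of $s$ and then recovering $F$ via distributional differentiation. Concretely, I would set
\[
G(s):=s^{-k}F(s),\qquad s\in\mathbb{C}_+,
\]
with $k$ chosen as in the statement ($1+\mu<k\le 2+\mu$). Because $s\mapsto s^{-k}$ is analytic on $\mathbb{C}_+$ and the bound $\|F(s)\|\le C_0(\sigma)|s|^\mu$ yields $\|G(s)\|\le C_0(\sigma)|s|^{\mu-k}$ with $\mu-k<-1$, we have $G\in\mathcal{A}(\mu-k,X,Y)$. The first theorem then produces a continuous causal function $g\colon\mathbb{R}\to L(X,Y)$ with $\mathrm{supp}\,g\subset[0,\infty)$ whose Laplace transform on $\mathbb{C}_+$ is $G$.

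The next step is to verify that the distributional derivative $g^{(k)}$ satisfies $\mathcal{L}\{g^{(k)}\}(s)=s^{k}G(s)=F(s)$. Since $g$ is continuous, causal, and of polynomial growth (inheriting the growth built into the class $\mathcal{A}$), it defines a tempered $L(X,Y)$-valued distribution supported in $[0,\infty)$; the same holds for $g^{(k)}$ by definition of distributional differentiation, so the Laplace transform of $g^{(k)}$ is well-defined by the pairing
\[
\mathcal{L}\{g^{(k)}\}(s)=\langle g^{(k)},\phi^*\rangle=(-1)^k\langle g,(\phi^*)^{(k)}\rangle,
\]
with $\phi^*(t)=\alpha(t)e^{-st}$ as in the excerpt.

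The computation then follows by Leibniz:
\[
(\phi^*)^{(k)}(t)=\sum_{j=0}^{k}\binom{k}{j}\alpha^{(j)}(t)(-s)^{k-j}e^{-st}.
\]
For $j\ge 1$ the factor $\alpha^{(j)}$ is supported in $[b,a]\subset(-\infty,0)$, hence disjoint from $\mathrm{supp}\,g$, so those terms annihilate against $g$. Only the $j=0$ term survives, producing $(-s)^k\langle g,\phi^*\rangle=(-s)^k G(s)$, and therefore $\mathcal{L}\{g^{(k)}\}(s)=(-1)^k(-s)^kG(s)=s^kG(s)=F(s)$, as required.

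The main obstacle is the bookkeeping in the distributional framework: one must confirm that $g$ generated by the previous theorem really does define a tempered $L(X,Y)$-valued distribution (so that $g^{(k)}$ makes sense in $\mathcal{S}'(\mathbb{R};L(X,Y))$), and that the Laplace transform pairing is independent of the cutoff $\alpha$ and agrees with the previously defined transform of $g$. Both are guaranteed by the general framework recalled from \cite{DaLi:1992,LaSa:2009b}; once they are in place, the argument above is essentially a one-line integration-by-parts.
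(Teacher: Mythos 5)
Your argument is correct, but note that the paper itself gives no proof of this theorem: it is stated as imported from \cite{LaSa:2009b}, with the detailed proof deferred to \cite{Sa:2012}. Your reduction --- pass to $G(s)=s^{-k}F(s)\in\mathcal{A}(\mu-k,X,Y)$ with $\mu-k<-1$, invoke the preceding theorem to get a continuous causal $g$ with $\mathcal{L}\{g\}=G$, and then recover $F=\mathcal{L}\{g^{(k)}\}$ by pairing with $\phi^*=\alpha e^{-st}$ and killing the $\alpha^{(j)}$, $j\ge 1$, terms by disjointness of supports --- is precisely the standard argument used in those references, so there is nothing genuinely different to compare. The one loose end you correctly flag yourself is that $g$ must be tempered (polynomially bounded) for the distributional pairing to make sense; this requires the mild extra hypothesis, implicit in the paper's class $\mathcal{A}(\mu,X,Y)$ and explicit in its definition of $\mathcal{A}(\mu,X)$, that $C_0(\sigma)$ grows at most like a power of $\sigma^{-1}$ as $\sigma\to 0$, after which the bound \eqref{eq:3.3} with $\sigma=t^{-1}$ gives the needed polynomial growth.
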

\begin{proposition} If $ F\in \mathcal{E}(\mu, \theta, X)$, then $F^{-1} \in \mathcal{A} (\mu, X^\prime, X)$.
\end{proposition}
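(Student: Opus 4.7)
The plan is to apply the Lax--Milgram lemma pointwise in $s\in\mathbb{C}_+$ to the sesquilinear form associated with the rotated operator $e^{i\theta(s)}F(s)$, and then to read off the required growth bound and analyticity for $F^{-1}$.

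First, I would fix $s\in\mathbb{C}_+$ and define on $X\times X$ the sesquilinear form
\[
a_s(\psi,\varphi):= e^{i\theta(s)}\langle F(s)\psi,\overline{\varphi}\rangle,
\]
where the pairing $\langle\cdot,\cdot\rangle$ denotes duality between $X'$ and $X$ (read as an antiduality in the second slot, so that $a_s$ is linear in $\psi$ and conjugate-linear in $\varphi$). Boundedness $|a_s(\psi,\varphi)|\le \|F(s)\|_{L(X,X')}\|\psi\|_X\|\varphi\|_X$ is immediate from $F(s)\in L(X,X')$, while the hypothesis $F\in\mathcal{E}(\mu,\theta,X)$ gives exactly the coercivity estimate
\[
\mathrm{Re}\,a_s(\psi,\psi)\ \ge\ \frac{c(\mathrm{Re}\,s)}{|s|^\mu}\|\psi\|_X^2.
\]

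Second, I would apply Lax--Milgram to $a_s$: for each $\ell\in X'$, set $L_\ell(\varphi):=e^{i\theta(s)}\ell(\overline{\varphi})$, which is an antilinear functional on $X$ with $\|L_\ell\|=\|\ell\|_{X'}$. Lax--Milgram produces a unique $\psi\in X$ with $a_s(\psi,\varphi)=L_\ell(\varphi)$ for all $\varphi$, i.e.\ $F(s)\psi=\ell$, and the standard Lax--Milgram estimate reads
\[
\|\psi\|_X\ \le\ \frac{|s|^\mu}{c(\mathrm{Re}\,s)}\|\ell\|_{X'}.
\]
Injectivity of $F(s)$ follows from the coercivity (if $F(s)\psi=0$ then $\mathrm{Re}\,a_s(\psi,\psi)=0$ forces $\psi=0$), so $F(s):X\to X'$ is a bijection with $F(s)^{-1}\in L(X',X)$ and
\[
\|F(s)^{-1}\|_{L(X',X)}\ \le\ \frac{|s|^\mu}{c(\mathrm{Re}\,s)}.
\]
Because $c$ is non-decreasing, for any $\sigma>0$ and $\mathrm{Re}\,s>\sigma$ I may replace $c(\mathrm{Re}\,s)$ by $c(\sigma)$ to get $\|F(s)^{-1}\|\le C_0(\sigma)|s|^\mu$ with $C_0(\sigma):=1/c(\sigma)$, which is exactly the growth condition defining $\mathcal{A}(\mu,X',X)$.

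Finally, I would verify analyticity of $s\mapsto F(s)^{-1}$. Fixing $s_0\in\mathbb{C}_+$, I write
\[
F(s)=F(s_0)\bigl[I_X+F(s_0)^{-1}(F(s)-F(s_0))\bigr]
\]
on $X$. Analyticity of $F$ makes the perturbation $F(s_0)^{-1}(F(s)-F(s_0))$ an analytic $L(X,X)$-valued function vanishing at $s_0$, so for $s$ in a small neighborhood of $s_0$ a Neumann series inverts the bracket, and
\[
F(s)^{-1}=\bigl[I_X+F(s_0)^{-1}(F(s)-F(s_0))\bigr]^{-1}F(s_0)^{-1}
\]
is analytic near $s_0$. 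Since $s_0\in\mathbb{C}_+$ was arbitrary, $F^{-1}:\mathbb{C}_+\to L(X',X)$ is analytic, completing the verification that $F^{-1}\in\mathcal{A}(\mu,X',X)$. There is no real obstacle here; the only subtlety is bookkeeping with the antiduality convention in the definition of $a_s$ so that Lax--Milgram delivers surjectivity of $F(s)$ itself (as opposed to an adjoint).
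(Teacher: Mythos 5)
Your proof is correct and follows exactly the route the paper intends: the paper gives no written proof of this proposition, remarking only that it ``is a simple consequence of the Lax--Milgram lemma,'' and your argument is precisely that consequence spelled out, including the correct passage from the constant $c(\mathrm{Re}\,s)$ to $C_0(\sigma)=1/c(\sigma)$ via monotonicity of $c$ and the standard Neumann-series verification of analyticity of $s\mapsto F(s)^{-1}$.
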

For the inversion formula, we introduce the class of $X$-space-valued functions 
\begin{itemize}
\item {$\mathcal{A}(\mu, X)$}: Let  $X$ be a complex  Banach space and $\mu \in \mathbb{R}$.  We write $F \in \mathcal{A} (\mu, X)$ when $F$ is an analytic function
$$ F\colon \mathbb{C}_+ \to X
$$ 
satisfying 
$$ ||F(s)||_X \leq C_F(Re~s) |s|^\mu, \quad \forall s \in \mathbb{C}_+,
$$ 
where $C_F\colon (0, \infty) \to (0, \infty)$ is a non-increasing function such that 
$$ C_F(\sigma) \leq \frac{C}{\sigma^m}, \quad \forall \sigma \in (0, 1]
$$
with $C$ independent of $\sigma$.
\end{itemize}
Since $ 1 \leq |s|/(Re~s),$ it is clear that $\mathcal{A}(\mu, X) \subset \mathcal{A}(\mu + \varepsilon, X)$ for all $\varepsilon >0$.
\subsection{The inversion formula}
Let $F \in \mathcal{A}(\mu, X)$ with $\mu <-1$. For any $\sigma > 0$, we define 
\begin{equation} \label{eq:3.2}
 f(t) =\mathcal{L}^{-1}\{F\}:= \frac{1}{2 \pi i} \int_{\sigma -i \infty}^{\sigma +i \infty} e^{st} F(s) ds. 
\end{equation} 
as the inverse of the Laplace transformed function $F(s)$.
We can see that $f$  is well defined, since 
\begin{eqnarray}
|| f(t) ||_{X} & =& || \frac{1}{2 \pi} \int_0^{\infty} e^{(\sigma + i\omega)t} F(\sigma + i \omega)
d \omega ||_X \nonumber \\
&\leq& \frac{1}{2 \pi}C_{F}(\sigma) e^{\sigma t}  \int_0^{\infty} \frac{2} {|\sigma + i \omega|^{-\mu}} d \omega
, \quad - \mu >1, \nonumber\\
&=& \frac{1}{2 \pi}C_{F}(\sigma) e^{\sigma t} \sigma^{1+\mu}\int_0^{\infty} \frac{\zeta^{-1/2}}{(1 + \zeta)^{-\mu/2}} d \zeta,  \quad 
\mbox{with}\quad \zeta= \omega^2/\sigma^2, \nonumber\\
&=&  \frac{1}{2 \pi}C_{F}(\sigma) e^{\sigma t} \sigma^{1 + \mu} B(\frac{1}{2}, \frac{-(1+\mu)}{2}), 
\label{eq:3.3}
\end{eqnarray}
where the Euler Beta function $B$ is defined by 
$$B(z_1,z_2):= \int_0^\infty t^{(z_1 -1)} /(1+t)^{(z_1 + z_2)}dt $$
 with $Re~z_1, Re~z_2 >0$.

As a consequence of ({\ref{eq:3.3}), we have 
\begin{proposition} If $F  \in \mathcal{A}(\mu, X)$ with $\mu <-1$, then $F$ is the Laplace transform of a continuous causal function $f\colon \mathbb{R} \to X$ with polynomial growth. 
\end{proposition}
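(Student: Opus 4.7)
My plan is to take the integral in (\ref{eq:3.2}) as the definition of $f$ on $[0,\infty)$ (and extend by zero to $t<0$), and then verify, in turn, that it is well defined and independent of $\sigma$, that it is continuous, that it is causal, that it grows at most polynomially, and finally that its Laplace transform equals $F$. The key working tool is the estimate (\ref{eq:3.3}): the bound
$$\|f(t)\|_X \le \frac{1}{2\pi}C_F(\sigma)\,e^{\sigma t}\,\sigma^{1+\mu}\,B\!\left(\tfrac12,\tfrac{-(1+\mu)}{2}\right)$$
already proves the integral in (\ref{eq:3.2}) is absolutely convergent for every fixed $\sigma>0$; we will exploit the freedom to choose $\sigma$ depending on $t$.

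First I would argue that the definition does not depend on $\sigma>0$. Apply Cauchy's theorem to the analytic $X$-valued function $s\mapsto e^{st}F(s)$ on a rectangular contour with vertical sides at $\operatorname{Re}s=\sigma_1$ and $\operatorname{Re}s=\sigma_2$ and horizontal sides at $\operatorname{Im}s=\pm R$. The analyticity of $F$ in $\mathbb{C}_+$ and the bound $\|F(s)\|_X\le C_F(\min(\sigma_1,\sigma_2))\,|s|^\mu$ with $\mu<-1$ make the horizontal contributions vanish as $R\to\infty$, giving equality of the two vertical integrals. Continuity of $f$ on $(0,\infty)$ then follows from dominated convergence in (\ref{eq:3.2}) with the integrable dominating function coming from the same $|s|^\mu$ bound.

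Next I would prove causality and polynomial growth by optimizing $\sigma$ in the estimate above. For causality, fix $t<0$ and let $\sigma\to\infty$ in (\ref{eq:3.3}); since $C_F$ is non-increasing it stays bounded by $C_F(1)$, while $e^{\sigma t}\to 0$ exponentially and $\sigma^{1+\mu}\to 0$ (because $1+\mu<0$), forcing $\|f(t)\|_X=0$. For polynomial growth on $t\ge 1$, I take $\sigma=1/t\in(0,1]$, so that the hypothesis $C_F(\sigma)\le C\sigma^{-m}$ applies. Substituting into (\ref{eq:3.3}) gives $e^{\sigma t}=e$ and
$$\|f(t)\|_X \le C'\,\sigma^{1+\mu-m} = C'\,t^{\,m-\mu-1},$$
which is the claimed polynomial control at infinity; on $[0,1]$ we already have boundedness from any fixed $\sigma$.

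It remains to check $\mathcal{L}\{f\}=F$ on $\mathbb{C}_+$. Fix $s$ with $\operatorname{Re}s>\sigma_0>0$, and compute $\int_0^\infty e^{-st}f(t)\,dt$ using the integral representation of $f$ with contour $\operatorname{Re}s'=\sigma_0$. The polynomial/exponential bounds just obtained make the double integral absolutely convergent, so Fubini's theorem yields
$$\mathcal{L}\{f\}(s)=\frac{1}{2\pi i}\int_{\sigma_0-i\infty}^{\sigma_0+i\infty}\frac{F(s')}{s-s'}\,ds'.$$
Closing the contour to the right (shifting $\sigma_0\to+\infty$ and using $\mu<-1$ to kill the arc at infinity) picks up only the simple pole at $s'=s$, with residue $F(s)$, which is the desired identity. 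This last step, namely rigorously justifying the contour shift and the Fubini exchange against the weak decay permitted by $\mathcal{A}(\mu,X)$, is the main technical obstacle; everything else is a direct exploitation of the single estimate (\ref{eq:3.3}).
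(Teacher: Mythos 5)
Your proposal is correct and follows essentially the same route as the paper: both take the contour integral (\ref{eq:3.2}) as the definition of $f$ and extract everything from the single bound (\ref{eq:3.3}), exploiting the freedom in $\sigma$ (letting $\sigma\to\infty$ for causality, taking $\sigma=1/t$ for polynomial growth). The paper merely asserts the proposition as a consequence of (\ref{eq:3.3}); you have supplied the standard omitted details ($\sigma$-independence via Cauchy's theorem, continuity via dominated convergence, and $\mathcal{L}\{f\}=F$ via Fubini and a residue computation), all of which are sound.
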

Finally, we include here the most crucial result for  the Lubich approach related to causal time convolutions \cite{Lu:1994}.  For the benefit of the reader,  we give a brief sketch of the proof. A slight improvement of this result  can be found in \cite{DoSa:2013} and \cite{Sa:2012}: it eliminates $t^\varepsilon$ in the right-hand side of \eqref{eq:3.4} and substitutes $g^{(k)}$ by a linear differential operator of order $k$ and constant coefficients acting on $g$.
\begin{theorem}\label{th:3.5}
Let $A= \mathcal{L}\{a\} \in \mathcal{A} (\mu, X,Y) $  with $\mu \geq 0$. Let 
$$ k:=\lfloor \mu +2 \rfloor,  \quad 
 \varepsilon:= k - (\mu +1) \in (0, 1].$$
If $g \in C^{k-1}(\mathbb{R}, X)$ is causal and $||g^{(k)}||_X$  is integrable, then $a*g \in C(\mathbb{R}, Y) 
$ is causal and 
\begin{equation} \label{eq:3.4}
||a*g(t)||_Y \leq c_{\varepsilon} ~ t^{\varepsilon} ~C_A(t^{-1}) ~\int_0^t ||g^{(k)} (\tau)||_X d \tau.
\end{equation}
\end{theorem}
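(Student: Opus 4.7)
The plan is to reduce to a situation where Theorem 3.1 applies by pulling $k$ factors of $s$ out of the symbol. Since $A \in \mathcal{A}(\mu,X,Y)$ and $k = \lfloor \mu+2\rfloor$, the operator-valued function $B(s) := s^{-k} A(s)$ satisfies $\|B(s)\|_{L(X,Y)} \leq C_A(\sigma)|s|^{\mu-k}$ on $\{\mathrm{Re}\,s > \sigma\}$, with $\mu - k = -1-\varepsilon < -1$. Hence, by the same integral estimate used in the derivation of Proposition 3.4 but applied in the operator-valued setting, $B$ admits an inverse Laplace transform $b \colon \mathbb{R} \to L(X,Y)$ which is continuous and supported in $[0,\infty)$.

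Next I would identify $a*g$ with $b*g^{(k)}$. Since $g \in C^{k-1}(\mathbb{R}, X)$ is causal, the initial data $g(0) = g'(0) = \cdots = g^{(k-1)}(0) = 0$ vanish (each $g^{(j)}$, $j \leq k-1$, is continuous across $0$ and zero on $(-\infty,0)$). Integration by parts then yields $\mathcal{L}\{g^{(k)}\}(s) = s^k\,\mathcal{L}\{g\}(s)$, so that
\[
\mathcal{L}\{a*g\}(s) = A(s)\,\mathcal{L}\{g\}(s) = B(s)\,\mathcal{L}\{g^{(k)}\}(s) = \mathcal{L}\{b*g^{(k)}\}(s).
\]
Uniqueness of the Laplace transform (or, equivalently, the interpretation of the left-hand side as being \emph{defined} by this identity when $a$ itself is only a distribution, as allowed by Theorem 3.2) gives $a*g = b*g^{(k)}$, which is continuous and causal, as claimed.

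I would then estimate $b$ pointwise via the Bromwich integral with abscissa $\sigma > 0$. The bound $\|B(\sigma+i\omega)\|_{L(X,Y)} \leq C_A(\sigma)(\sigma^2+\omega^2)^{(\mu-k)/2}$, together with the substitution $\omega = \sigma u$, yields
\[
\|b(t)\|_{L(X,Y)} \leq \frac{C_A(\sigma)}{2\pi}\, e^{\sigma t}\,\sigma^{\mu-k+1}\int_{-\infty}^{\infty}(1+u^2)^{(\mu-k)/2}\,du \;=\; \tilde c_\varepsilon\, C_A(\sigma)\, e^{\sigma t}\,\sigma^{-\varepsilon},
\]
the $u$-integral converging precisely because $\mu - k < -1$. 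Optimising in $\sigma$ by the choice $\sigma = 1/t$ (for $t>0$) removes the exponential at unit cost and converts $\sigma^{-\varepsilon}$ into $t^\varepsilon$, giving $\|b(t)\|_{L(X,Y)} \leq c_\varepsilon\, t^\varepsilon\, C_A(t^{-1})$.

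The conclusion then follows from the standard convolution inequality applied to $b*g^{(k)}$: since $(t-\tau)^\varepsilon \leq t^\varepsilon$ and, by monotonicity of $C_A$ and the fact that $(t-\tau)^{-1} \geq t^{-1}$ for $\tau\in[0,t]$, also $C_A((t-\tau)^{-1}) \leq C_A(t^{-1})$, both factors may be pulled out of the integral, producing exactly \eqref{eq:3.4}. The main technical obstacle is the second step: justifying $a*g = b*g^{(k)}$ rigorously when $a$ itself is at best a tempered distribution (cf.\ Theorem 3.2), which is where the causality of $g$ and the vanishing of its first $k-1$ derivatives at the origin become indispensable; once this identification is granted, the remainder is essentially the operator-valued version of the calculation already performed for Proposition 3.4 combined with the optimal choice of Bromwich abscissa.
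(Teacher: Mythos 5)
Your proposal is correct and follows essentially the same route as the paper: factor the symbol as $s^{-k}A(s)$ with $\mu-k=-1-\varepsilon<-1$, identify $a*g$ with $\mathcal{L}^{-1}\{s^{-k}A(s)\}*g^{(k)}$, bound the kernel via the Bromwich integral, and take $\sigma=t^{-1}$. The only cosmetic differences are that you optimize the abscissa pointwise in $\tau$ (invoking monotonicity of $C_A$) where the paper fixes $\sigma=t^{-1}$ globally and bounds $e^{\sigma\tau}\le e^{\sigma t}$, and that you spell out the justification of $a*g=b*g^{(k)}$, which the paper simply asserts.
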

\begin{proof}
Let $\mathcal{L}\{g\} = G(s)$.  Since $ a*g = \mathcal{L}^{-1}( A(s)G(s))$, we see that
$\mu - k = -1 -\varepsilon < -1$ and 
\begin{eqnarray*}
||a*g(t)||_Y& =& ||\mathcal{L}^{-1}\{ (s^{-k}A(s) \} * g^{(k)}||_Y  \\
&=&|| \int_0^t  \mathcal{L}^{-1} \{ (s^{(-k)} A(s) \} (\tau) g^{(k)}(t- \tau) d\tau ||_Y\\
&\leq& \int_0^t  \Big( \frac {1} {2 \pi} \int_0^\infty\frac  {2 C_A} {| \sigma +i \omega |^{1+ \varepsilon} }           d\omega  \Big)  e^{\sigma \tau}  || g^{(k)}  (t -\tau)  ||_X  d  \tau \\
&=& \frac{e^{\sigma t}}{2 \pi} C_A(\sigma) \sigma^{-\varepsilon} B(\frac{1}{2},  \frac{1}{\varepsilon})
\int_0^t || g^{(k)}  (t -\tau)  ||_X  d  \tau \\
&=& \frac{e^{\sigma t}}{2 \pi} C_A(\sigma)  \sigma^{-\varepsilon}  B(\frac{1}{2},  \frac{1}{\varepsilon})
\int_0^t || g^{(k)} (\tau) ||_X  d  \tau, 
\end{eqnarray*}
 By taking $\sigma =t^{-1}$ and $c_{\varepsilon} =  1/2 \pi~  e~ B(1/2,  1/\varepsilon)$, 
 this gives the estimate
(\ref{eq:3.4}).
\end{proof}
\subsection{An example}\label{s:3.4}
In order to illustrate the applicability of Theorem \ref{th:3.5}, and the concepts introduced in the section, we end this section by considering  a specific example.  Let us consider the simple retarded 
boundary integral operator, namely $\mathcal{V}$ in (\ref{eq:2.15})
\begin{equation*}
  (\mathcal{V} * \lambda)(x,t) := \int_{\Gamma} E(x,y)  \lambda(y, t- \frac{|x-y|}{c})d\Gamma_y, 
 \quad (x,t) \in \Gamma\times (0,T].
\end{equation*}
Note that the convolutional notation will be fully justified with this approach. It is also customary to write (at least formally)
\begin{eqnarray} 
 (\mathcal{V}*\lambda) (x,t) &= &\int_0^t \int_{\Gamma} E(x,t; y,\tau)  \lambda(y, \tau)d\Gamma_y d\tau,\label{eq:3.5}
 \end{eqnarray}
where $E(x,t;y,\tau)$ is the fundamental solution of  the wave operator 
$
\Box_{x,t}\varphi:= -  \Delta \varphi + \frac{1}{c^2} \partial^2 \varphi  / \partial
t^2, $ namely, 
\begin{equation}\label{eq:3.6}
E(x,t; y, \tau):= \delta\Big((t - \tau) - \frac{|x-y|}{c})\Big) E(x,y),
\end{equation}
where $\delta$ is the Dirac delta. 
Hence, we have 
\begin{eqnarray*}
\mathcal{L}\{\mathcal{V} * \lambda \} &= &V(s) \Lambda(s) , \quad x \in \Gamma,\; s \in \mathbb C_+,  \; i.e., \\
( \mathcal{V} *\lambda)(x,t)&=&\Big( \mathcal{L}^{-1}\{V(s)\}* \lambda \Big)(x,t),  
\quad x \in \Gamma,\; s \in \mathbb C_+.
\end{eqnarray*}
It can be shown that $V(s) $ is  just the simple - layer boundary integral operator for
the Laplace transform  of the wave operator,  
\begin{equation}\label{eq:3.7}
\mathcal{L} \{ \Box_{x,t} \varphi\}: = -\Delta \Phi + \frac{s^2}{c^2} \Phi, 
 \quad \Phi=\mathcal{L}\{\varphi\},
\end{equation}
which is defined explicitly as 
$$
V(s) \psi:= \int_{\Gamma} E_{s/c}(x,y)  \psi(y) d\Gamma_y,  \quad x\in \Gamma
$$
and 
$$E_{s/c}(x,y)= E(x,y) exp\{- {s |x,y| /c }\}  = \frac{ 1 }  {4 \pi |x -y|} exp\{ - s |x,y| / c  \} $$ 
is the fundamental solution of the transformed  wave operator  in (\ref{eq:3.7}). 

We now summarize the properties of the operator $V(s)$ as follows. Note that in our notation the angled bracket is linear in both components and thus symmetry is not to be confused with self-adjointness.
\begin{itemize}
\item{Symmetry}:
$$\langle \chi, V(s) \psi\rangle = \langle \psi, V(s) \chi \rangle, \quad \forall~ \chi, \psi 
\in H^{-1/2}(\Gamma). $$\\
\item{Positivity}:
$$Re \Big(e^{i \theta} \langle\overline{\psi}, V(s) \psi\rangle\Big) =\frac{\sigma}{|s|} |||u_{\psi}|||^2_{|s|, \mathbb{R}^3\setminus \Gamma}, \quad \forall \psi \in H^{-1/2}(\Gamma),$$
where  $\theta = Arg \;s$, the principal argument of $~s, \; u_{\psi} :=S(s)\psi$ in $\mathbb{R}^3 \setminus \Gamma$, and  $S(s)$ is the simple-layer potential 
corresponding to $V(s)$ with norm defined by
$$ |||u_{\psi}|||^2_{|s|, \mathbb{R}^3\setminus \Gamma}:= \int_{\mathbb{R}^3 \setminus \Gamma}
\Big\{|\nabla u_{\psi} |^2
+ \frac{|s|^2}{c^2} |u_{\psi}|^2 \Big\} dx.$$ 
\item{Coercivity}: $$Re \Big(e^{i \theta} \langle\overline{\psi}, V(s) \psi\rangle\Big) \geq C \frac{\sigma \underline{\sigma}}{|s|^2}\; ||\psi||^2_{H^{-1/2}(\Gamma)}. $$
Here and in the sequel,  $C$ is a generic constant independent of $s$. 
\item{Bounds}:
$$
||V(s)||_{L(X,X')} \leq  C \frac{|s|}{\sigma \underline\sigma^2}\quad , \quad
||V^{-1}(s) ||_{L(X', X)}\leq  C \frac{|s|^2}{\sigma \underline\sigma}.
$$
where $\sigma = Re~ s, \underline{\sigma}:= min \{1, \sigma\},  X=H^{-1/2}(\Gamma)$, and 
$X^\prime = H^{1/2}(\Gamma).$
\end{itemize}
We shall return to these properties later.  Most proofs of them are readily  available (see, e.g. \cite{BaHa:1986a, Lu:1994, LaSa:2009b} and \cite{Sa:2012}). As consequences of these properties, we see that 
$$ V  \in \mathcal{E}(2, \theta,
H^{-1/2}(\Gamma)) \cap \mathcal{A}(1, H^{-1/2}(\Gamma), H^{1/2}(\Gamma)),\; \mbox{and} \; V^{-1} \in \mathcal{A} (2, H^{1/2}(\Gamma), H^{-1/2}(\Gamma)). $$
Moreover, from Theorem\;\ref{th:3.5},  we have  the estimate for the simple retarded boundary integral operator 
\begin{equation} \label{eq:3.8}
||\mathcal{V}*\lambda(t)||_{H^{1/2}(\Gamma)} \leq c_{\varepsilon} ~ t^{\varepsilon} ~C_V(t^{-1}) ~\int_0^t ||\lambda^{(k)} (\tau)||_{H^{-1/2}(\Gamma)} d \tau,
\end{equation}
where $ \mu =1, k=\lfloor 1+2 \rfloor=3,  \varepsilon  =3 - (1 +1)=1,\,$ and $ C_V(t^{-1}) =  C~  t\; max\{1, t^2\}$. 
\section{Variational solutions} 
We now return to the initial-boundary transmission problem defined by the
partial  differential equations (\ref{eq:2.1} ), (\ref{eq:2.5}), the  initial conditions 
(\ref{eq:2.6}), and the transmission conditions (\ref{eq:2.7}) and (\ref{eq:2.8}). Our first step is to consider the problem in the Laplace transformed domain.

\subsection{Formulation in the transformed domain}
In the following we let $\mathbf{U}(s) :=  \mathbf{U}(x,s)= \mathcal{L}\{ {\bf u}(x,t)\},  \Phi (s)
:=\Phi(x,s) = \mathcal{L}\{\varphi(x,t)\}$. Then the initial-boundary transmission problem consisting of
(\ref{eq:2.1}), (\ref{eq:2.5}),( \ref{eq:2.6}), (\ref{eq:2.7}) and (\ref{eq:2.8}) in the Laplace transformed domain becomes  the following transmission boundary value problem: 
\begin{eqnarray} 
 - \Delta^{*} \mathbf{U}(s)
 +  \rho_e s^2 \mathbf{U}(s)  &=& \mathbf{0} \quad \mbox{in} \quad  
\Omega, \label{eq:4.1}\\
-\Delta \Phi(s) + \frac{s^2}{c^2} \Phi(s) &=& 0 \quad \mbox{in}\quad  \Omega^c \label{eq:4.2} \\
\bm{\sigma} (\mathbf{U} )^- \mathbf{n} = - \rho_0\; s \Big( \Phi(s) + \Phi^{inc}  \Big) ^+ \mathbf{n}, 
&\mbox{and}&
s \mathbf{U}^- \cdot \mathbf{n} = - \Big( \frac{ \partial \Phi}{\partial n} + \frac {\partial \Phi^{inc}}{\partial n}\Big)^+ \; \mbox{on}\; \; \Gamma  \label{eq:4.3}
\end{eqnarray}
for $s\in \mathbb C_+$.  We remark that \eqref{eq:4.1}--\eqref{eq:4.3}  is an exterior scattering  problem,  and normally  a radiation condition is needed  in order to guarantee  the uniqueness  of the solution of the problem. In the present case, the radiation condition is substituted by the assumption that $\Phi\in H^1(\Omega^c)$, which is a Laplace-transform version of the weak Huygens principle.
 
To derive the proper nonlocal boundary problem, as usual, we begin via Green's third identity with 
the representation of the solutions of (4.2) in the form: 
\begin{equation}\label{eq:4.4}
 \Phi = D(s) \hat{\phi} - S(s)\hat{\lambda} \quad \mbox{in} \quad \Omega^c,
 \end{equation}
where $\hat{\phi}:= \Phi^+(s)$ and $\hat{\lambda}:= \partial \Phi^+ /\partial n$ are the Cauchy data for the operator in (\ref{eq:4.2}) and $S(s)$ and $D(s)$ are the simple-layer and duble-layer potentials
\begin{eqnarray}
S(s) \hat{\lambda} (x) &:=& \int_\Gamma  E_{s/c}(x,y) \hat{\lambda}(y) d\Gamma_y, 
 \quad x \in \Omega^c,
 \label{eq:4.5}\\
D(s) \hat{\phi} (x)  &:=& \int_\Gamma \frac{\partial}{\partial n_y} E_{s/c}(x,y) \hat{\phi}(y)  d\Gamma_y, \quad x \in \Omega^c
\label{eq:4.6}.
\end{eqnarray}
Here 
$$E_{s/c}(x,y) =  \frac{exp\{- s |x-y|/c\}} {4 \pi |x-y|}$$
 is the fundamental solution of the  operator  in  (\ref{eq:4.2}). By  standard arguments in potential
 theory,  we have the relations for the the Cauchy data $\hat{\lambda}$ and $\hat{\phi} $:
 \begin{equation}\label{eq:4.7}
\begin{pmatrix}
\hat{\phi} \\[3mm]
\hat{\lambda}\\
\end{pmatrix}
= \left (
\begin{matrix}  
      \frac{1}{2}I + K(s) & -V(s) \\[3mm]
       -W(s)  &  ( \frac{1}{2}I - K(s))'  \\
    \end{matrix}
    \right )\begin{pmatrix}
\hat{\phi} \\[3mm]
\hat{\lambda}\\
\end{pmatrix}
\quad \mbox{on} \quad \Gamma.
 \end{equation}
Here $V, K, K^\prime $ and $W$  are the four  basic boundary integral operators familiar from potential theory such that 
\begin{eqnarray*}
V(s) \hat \lambda (x) &:=& \int_\Gamma  E_{s/c}(x,y) \hat{\lambda}(y) d\Gamma_y, 
 \quad x \in \Gamma\\
K(s) \hat{\phi} (x)  &:=& \int_\Gamma \frac{\partial}{\partial n_y} E_{s/c}(x,y) \hat{\phi}(y)  d\Gamma_y, \quad x \in \Gamma\\
K^\prime(s) \hat \lambda (x) &:=& \int_\Gamma  \frac{\partial}{\partial n_x} E_{s/c}(x,y) \hat{\lambda}(y) d\Gamma_y, 
 \quad x \in \Gamma,\\
W(s) \hat{\phi} (x)  &:=&- \frac{\partial}{\partial n_x}  \int_\Gamma \frac{\partial}{\partial n_y} E_{s/c}(x,y) \hat{\phi}(y)  d\Gamma_y, \quad x \in \Gamma
\end{eqnarray*} 
 By using the second transmission condition in (\ref{eq:4.3}), we obtain  from the second  boundary integral equation in  (\ref{eq:4.7}),  
\begin{equation}\label{eq:4.8}
-s \mathbf{U}^-  \cdot \mathbf{n} - ( \frac{1}{2} I -  K)'  \hat{\lambda}  + W \hat{\phi} =
\Big(\frac{\partial \Phi^{inc }}{\partial n}\Big)^+ \;\quad \mbox{on}\quad \Gamma
\end{equation} 
while  the  second boundary integral equation in (\ref{eq:4.7}) is simply 
\begin{equation} \label{eq:4.9}
( \frac{1}{2}I - K) \hat{\phi} + V \hat{\lambda} = 0 \quad \mbox{on} \quad \Gamma.
\end{equation}

On the other hand, the weak solution of (\ref{eq:4.1}) in $\Omega$  leads to the operator equation in 
of the form 
\begin{equation}\label{eq:4.10} 
{\mathbf A}_{ \Omega} \mathbf{U}  - \gamma^{\prime}(\bm{\sigma}(\mathbf {U})^-\mathbf{n}) = \mathbf{0} 
\end{equation}
in the dual of $\mathbf{H}^1(\Omega)$.  Here   $\mathbf{A}_{\Omega}(s)  \colon {\mathbf H}^1( \Omega)  \to ({\mathbf H}^1(\Omega))'$ is defined by
\begin{eqnarray*}
\langle \mathbf{A}_{ \Omega}(s)\mathbf{U},\mathbf{V}\rangle: = a( \mathbf{U},
 \mathbf{V})_{s, \Omega}
=
\int_{\Omega} \Big(
\lambda (div \,\mathbf {U})(div\,  \mathbf{V} )
+ 2 \mu \bm \varepsilon(\mathbf{U}) : \varepsilon ( \mathbf{V})  + \rho_{e} \,s^2 \mathbf{U}\cdot  \mathbf{V} \Big)\,  dx  && \\
\quad \mathbf U, \mathbf V \in \mathbf H^1(\Omega), &&
\end{eqnarray*}
where $ \lambda, \mu$  the Lam\'e constants. Also in \eqref{eq:4.10} we use $\gamma^\prime$, defined as the adjoint of the trace operator. Then by using the  first transmission  condition  in (\ref{eq:4.3}),
we substitute $ - \rho_0\; s \Big( \Phi(s) + \Phi^{inc}  \Big) ^+ \mathbf{n}$
for  $\bm{\sigma} (\mathbf{U} )^- \mathbf{n} $  into (\ref{eq:4.10}) which leads to the equation 
\begin{equation} \label{eq:4.11}
\tilde{\mathbf A}_{  \Omega}(s) \mathbf{U} +s \gamma^{\prime} ( \hat \phi\mathbf n) = -s \gamma' ((\Phi^{inc})^+ \mathbf{n}),
\end{equation}
where we have replaced $\tilde {\mathbf A}_{ \Omega}:=\rho^{-1}_0 {\mathbf A}_{ \Omega}$.
Collecting (\ref{eq:4.11}), (\ref{eq:4.8}) and (\ref{eq:4.9}), we arrive the following nonlocal problem, which reads: Given data $(\hat d_1, \hat d_2, \hat d_3)  \in  X' $, find $(\mathbf{U}, \hat\phi, \hat\lambda) 
\in X$ such that 
\begin{equation}\label{eq:4.12}
\pmb{\mathscr{A}}   \begin{pmatrix} 
      \mathbf{U}   \\
      \hat\phi\\
      \hat\lambda \\
   \end{pmatrix}:=
\left (   \begin{matrix}
   \tilde{\mathbf A}_{ \Omega}&s  ({\mathbf n}^\top \gamma)^\prime &0\\
   - s (\mathbf{n}^\top\gamma) & W & - ( \frac{1}{2} I -  K)' \\
   0 & ( \frac{1}{2}I - K) & V\\
   \end{matrix} 
\right )
   \begin{pmatrix}
    \mathbf{U}   \\
      \hat\phi\\
      \hat\lambda \\
    \end{pmatrix}
= \begin{pmatrix}
\hat d_1\\
\hat d_2\\
\hat d_3\\
   \end{pmatrix}, 
\end{equation}
where $(\hat d_1, \hat d_2, \hat d_3) : =( -s \gamma' ((\Phi^{inc})^+ \mathbf{n}), \;(\partial \Phi^{inc }/\partial n)^+,  \;0)$. The product spaces
$$ X ={\mathbf H}^1( \Omega)  \times H^{1/2}(\Gamma) \times H^{-1/2}(\Gamma) \quad \mbox{and}\quad
X^{\prime} =  ({\mathbf H}^1(\Omega))'\times H^{-1/2}(\Gamma) \times H^{1/2}(\Gamma)$$
are reciprocally dual Hilbert spaces.
Our aim is to show that Equation (\ref{eq:4.12}) has a unique solution in $X$. We will do this in the next subsection. However, before we do so, we will first show that  
$\pmb{\mathscr{A} }$
 is invertible. 

Using Gaussian elimination (as in  \cite{LaSa:2009b}), a simple computation shows that the matrix $\pmb{\mathscr{A}} $ 
 of operators can be decomposed in the form:
\begin{eqnarray} 
\pmb{\mathscr{A}} &=& \left (
\begin{matrix}
I & 0 &0 \\
 0& I & -(\frac{1}{2} I -  K)' V^{-1}\\
 0 & 0& I\\
  \end{matrix}\right ) 
\left ( \begin{matrix}
 \tilde{\mathbf A}_{ \Omega}&s  ({\mathbf n}^\top \gamma)^\prime&0 \\
- s\, ({\mathbf n}^\top \gamma)& B& 0\\
 0 & 0& V\\
 \end{matrix}\right )
 \left ( \begin{matrix}
I&0&0 \\
0&I&0\\
0 &  V^{-1}(\frac{1}{2} I -  K)& I\\
 \end{matrix} \right )  \nonumber \\   
&=:& P^{\prime} C P^{-1} ,   
\label{eq:4.13}         
\end{eqnarray}
where  $B := W + (\frac{1}{2} I -  K)' V^{-1}(\frac{1}{2} I -  K)$ .  We note that the operator matrix $C$  is {\em strongly elliptic}  (\cite{HsWe:2008, Mi:1970})  in the sense that 
\begin{eqnarray}\label{eq:4.14}
 Re \Big\{ \langle \Theta C (v, \psi, \chi) , \overline{ (v, \psi, \chi) } \rangle \Big\}
 &\geq & c(Re \, s)\;  |s|^{-2} || (v, \psi, \chi)||^2_X 
 \end{eqnarray}
for all $(\mathbf{v}, \psi, \chi) \in X $, where  $ \Theta$ is the matrix defined by 
$$ \Theta:= \left (\begin{matrix}
e^{- i \theta}&0 &0\\
0& e^{-i \theta} &0\\
0&0& e^{i \theta}\\
\end{matrix} \right ).
$$
Since both $P$ and $P^{\prime}$ are invertible, it follows from (\ref{eq:4.14})  that 
$\pmb{\mathscr{A}}$  is invertible. 
As for   the proof of  (\ref{eq:4.14}), we only want to point out  that 
$$ Re \Big\{s e^{-i\theta}  (  \langle \gamma^{\prime}(\psi  \mathbf{n}),  \bar{\mathbf{v}} \rangle   - 
\langle  \gamma  \mathbf{v} , \bar{\psi} \mathbf {n} \rangle ) \Big\} = 0, $$
so that 
\begin{eqnarray*}
 Re \Big\{ \langle \Theta C (\mathbf{v}, \psi, \chi) , \overline{ (\mathbf{v}, \psi, \chi) } \rangle \Big\}
 &= & Re\Big\{ e^{-i \theta}\langle \tilde{\mathbf A}_{  \Omega} \mathbf{v}, \bar{\mathbf{v}}\rangle +
 e^{-i \theta} \langle B\phi, \bar{\phi}\rangle + e^{i \theta} \langle V \chi,  \overline \chi \rangle\Big\}.\\
 \end{eqnarray*}
 It is clear that  what remans to be done is  to show that the operators  $\tilde{\mathbf A}_{ \Omega}, B$ 
 and $V$  belong to the appropriate class $\mathcal{E}(\mu, \theta, X)$  as $V$  in the example 
 (see \S \ref{s:3.4}).
The details of  the proof will  be omitted  here.  However, in order to show that 
$\pmb{\mathscr{A}}^{-1}$
 belongs to the appropriate  class $ \mathcal{A}(\mu, X, X^{\prime}) $ so that we may apply Theorem  \ref{th:3.5} to $\pmb{\mathscr{A}}^{-1}$
 for  obtaining   desired results in the time domain, we follow \cite{Sa:2012} in considering the existence and unique results of  a problem equivalent to the nonlocal problem defined by (\ref{eq:4.12}). 

Suppose that $(\mathbf{U}, \hat\phi, \hat\lambda)\in X$ is a solution of (\ref{eq:4.12}).  Let 
\begin{equation}\label{eq:4.15}
u:= D \hat\phi -  S \hat\lambda  \quad \mbox{in} \quad \mathbb{R}^3 \setminus \Gamma.
\end{equation}
Then  $u\in H^1(\mathbb{R}^3 \setminus \Gamma)$  is the solution of the transmission problem:
\begin{equation}\label{eq:4.16}
 - \Delta u + \frac{s^2}{c^2} u = 0 \quad \mbox{in}\quad  \mathbb{R}^3 \setminus \Gamma
 \end{equation}
satisfying  the following jump relations across $\Gamma$,  
$$[\gamma u]:=  \gamma^+ u - \gamma u=\hat\phi \in H^{1/2}(\Gamma), \quad  [\partial_n u]  :=\partial_n^+ u - \partial_n u= \hat\lambda \in H^{-1/2}(\Gamma). $$

First, from (\ref{eq:4.12}) we see that 
\begin{gather} 
 \tilde{\mathbf A}_{  \Omega}(s)\mathbf{U}+s \gamma^{\prime}([\gamma u]\mathbf{n})= \hat d_1 \quad \mbox{in} \quad \Omega.\label{eq:4.17}\\
-s \gamma\mathbf{U}\cdot \mathbf{n} - \partial^{+}_ n u = \hat d_2 \quad\mbox{on} \quad \Gamma, \label{eq:4.18}\\
- \gamma u = \hat d_3 \quad \mbox{on} \quad \Gamma. \label{eq:4.19}
 \end{gather}
 Since $\hat d_3 =0 $, this means that $u$ is a solution of the homogeneous Dirichlet problem 
 for the partial differential equation (\ref{eq:4.16}) in $\Omega$. Hence by the uniqueness of the the solution, we obtain  $u \equiv 0$ in  $\Omega$. Consequently, we have 
 \begin{equation} \label{eq:4.20}
 [\gamma u] = \gamma^+ u = \hat \phi \quad \mbox{and} \quad [\partial_n u] = \partial_n^+ u = \hat \lambda.
 \end{equation}
 
Next, we  consider the variational formulation of the problem for equations  (\ref{eq:4.16}) and (\ref{eq:4.17})  together with the boundary condition: (\ref{eq:4.18}).   We will seek  a  solution 
\begin{equation*}
(\mathbf{U}, u) \in  \pmb{\mathscr{H}}  =  {\mathbf H}^1(\Omega) \times H^1(\Omega^c)
\end{equation*}
 with the corresponding  test functions  $(\mathbf{V}, v)$ in the same function space.
 To derive the variational equations, we should keep in mind that the variational formulation should be formulated not in terms of  the Cauchy data $\hat\phi$ and $\hat \lambda$ directly but only in directly through the jumps of $u$ as indicted. 
 
We begin with the first  Green formula for the equation (\ref{eq:4.16}).  Let  $(u,v) \in  H^1(\Omega^c) \times  H^1(\Omega^c)$. Then
\[
 - \langle \partial^+_nu, \overline{\gamma^+ v} \rangle = \int_{\Omega^c} \Big\{\nabla u \cdot \overline{\nabla v} + \frac{s^2}{c^2} u\overline{ v}\Big\} dx
 =  a_{s,\, \Omega^c} (u, v)=: \langle A_{\Omega^c}(s)\; u,\overline{ v}\rangle.
\]
From condition  (\ref{eq:4.18}), we obtain 
\begin{equation} \label{eq:4.21}
 \langle A_{\Omega^c }(s)\, u,\overline{ v}\rangle  = - \langle \partial^+_n u, \overline{\gamma^+ v} 
 \rangle  
= \langle \hat d_2, \overline{\gamma^+ v}\rangle  + \langle  s \gamma\mathbf{U}\cdot \mathbf{n}, \overline{\gamma^+ v} \rangle. 
\end{equation} 
Together with the weak formulation of (\ref{eq:4.17}) we arrive at the following variational formulation:
 Find $(\mathbf{U}, u) \in  \pmb{\mathscr{H}} $    satisfying
 \begin{equation} \label{eq:4.22}
\langle \tilde{\mathbf A}_{ \Omega}(s)\mathbf{U},\overline{ \mathbf{V}}\rangle
 + \langle A_{ \Omega^c}(s)\,u,\overline{ v}\rangle  
+s\Big\{ \langle \gamma^{\prime}((\gamma^+ u)\mathbf{n}), \overline{V}\rangle - \langle   \gamma\mathbf{U}\cdot \mathbf{n}, \overline{\gamma^+ v} \rangle\Big\} =\langle \hat d_1, \overline{\mathbf{V}}\rangle + \langle \hat d_2, \overline{\gamma^+ v}\rangle 
\end{equation}
for all $(\mathbf{V}, v) \in  \pmb{\mathscr{H}}$.
 We remark that by the construction, it can be shown that  as in \cite{Sa:2012} this variational problem is equivalent  to   the transmission problem defined by (\ref{eq:4.17}), (\ref{eq:4.16}),
 and  (\ref{eq:4.18}). The later is equivalent to the nonlocal problem defined by (\ref{eq:4.12}).  Consequently, the variational problem (\ref{eq:4.22}) is equivalent to the nonlocal problem (\ref{eq:4.12}).  Hence
for the existence of the solution of  (\ref{eq:4.12})},  it is sufficient to show the existence of the solution of (\ref{eq:4.22}).
\subsection{Existence and uniqueness results }
We recall that  $|||u|||_{|s|, \Omega^c}$ 
is  the  norm of  $u$ defined by
\begin{equation}
 |||u|||_{|s|, \Omega^c} :=  \Big\{ ||\nabla u ||^2_{0, \Omega^c}
+ \frac{|s|^2}{c^2} ||u||^2_{0, \Omega^c}\Big\}^{1/2} =\langle A_{ \Omega^c}(|s|)\, u,\overline{ u}\rangle^{1/2}. \label{eq:4.23}
\end{equation}
 Similarly, we define the norm of $\mathbf{U}$
 \begin{eqnarray}
 |||\mathbf{U}|||_{|s|, \Omega} &:=& \langle \mathbf{A}_{  \Omega}(|s|)\mathbf{U},\overline{\mathbf{U}}\rangle^{1/2}. \label{eq:4.24}
 \end{eqnarray}
 We also need the following  inequalities for the equivalent norms 
 \begin{gather}
\underline{\sigma } ||| \mathbf{U}|||_{1, \Omega} \leq  |||\mathbf{U}|||_{|s|, \Omega}\leq
\frac{|s|}{\underline{\sigma}}  |||\mathbf{U}|||_{1, \Omega} ,\label{eq:4.25}\\
\underline{\sigma} ||| u|||_{1, \Omega^c} \leq  |||u|||_{|s|, \Omega^c}  \leq 
\frac{|s|}{\underline {\sigma} }  |||u|||_{1,\Omega},   \label{eq:4.26}
 \end{gather}
which can be obtained  from the  inequalities: 
 $$\min\{1, \sigma\} \leq \min\{1, |s|\},\quad \mbox{and} \quad  \max\{1, |s|\} \min\{1, \sigma\}
 \leq |s|, \;\forall  s \in \mathbb{C}_+.$$
 We remark that the norm $ |||{ u}|||_{1, \Omega^c}$ is equivalent to $ ||{ u}||_{H^1(\Omega^c)}$ and  so is the energy norm $|||{ \mathbf{U}}|||_{1, \Omega}$  equivalent to the $\mathbf{H}^1(\Omega)$-norm of ${ \mathbf{U}} $ by the second Korn  inequality \cite{Fi:1972}.  In the following,  the ${c_j}'s $ are generic constants independent of $s$ which may or not not be the same at different places.  

 Then we have the following basic results.
\begin{theorem}\label{th:4.1}
The variational problem \em{ (\ref{eq:4.22})} has a unique solution $(\mathbf{U}, u) \in
\pmb{\mathscr{H}} $. Moreover,  the following estimates hold:
 \begin{eqnarray} 
 \Big\{ |||\mathbf{U}|||^2_{|s|, \Omega} + |||u|||^2_{|s|, \Omega^c}\Big\}^{1/2}
&\leq& c(\sigma, \underline{\sigma}) |s|  ||(\hat d_1, \hat d_2,  0)||_{X^{\prime}}, \label{eq:4.27}
\end{eqnarray}
where  $c(\sigma, \underline\sigma)$ is a constant depending only on  $\sigma$= {Re\, s} and $\underline{\sigma} = min\{1, \sigma\}$.
\end{theorem}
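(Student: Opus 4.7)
The plan is to recast (4.22) as a Lax--Milgram problem on $\pmb{\mathscr{H}}={\mathbf H}^1(\Omega)\times H^1(\Omega^c)$. Define the sesquilinear form
\[
\mathcal{B}((\mathbf{U},u),(\mathbf{V},v)) := \langle \tilde{\mathbf A}_{\Omega}(s)\mathbf{U},\overline{\mathbf{V}}\rangle + \langle A_{\Omega^c}(s)\,u,\overline{v}\rangle + s\langle (\gamma^+ u)\mathbf{n},\overline{\gamma\mathbf{V}}\rangle - s\langle \gamma\mathbf{U}\cdot\mathbf{n},\overline{\gamma^+ v}\rangle
\]
and the antilinear functional $\mathcal{F}(\mathbf{V},v) := \langle \hat d_1,\overline{\mathbf{V}}\rangle + \langle \hat d_2,\overline{\gamma^+ v}\rangle$. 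Continuity of $\mathcal{B}$ on $\pmb{\mathscr{H}}$ follows from standard bounds on the Lam\'e and Helmholtz bilinear forms together with the continuity of the trace operator; the constant grows polynomially in $|s|$, which is acceptable for Lax--Milgram (only the coercivity constant matters for the size estimate).

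The heart of the argument is coercivity in the weighted norm. I would test with $(\mathbf{V},v)=(\mathbf{U},u)$ and apply $\mathrm{Re}(e^{-i\theta}\punto)$ where $\theta=\mathrm{Arg}\,s$. The coupling terms then combine into
\[
\mathrm{Re}\Bigl[e^{-i\theta} s\bigl(\langle (\gamma^+u)\mathbf{n},\overline{\gamma\mathbf{U}}\rangle - \langle \gamma\mathbf{U}\cdot\mathbf{n},\overline{\gamma^+u}\rangle\bigr)\Bigr]=0,
\]
because $\mathbf{n}$ is real and the bilinear $H^{\pm 1/2}$ pairing makes the two bracketed quantities complex conjugates of one another, so their difference is purely imaginary and is killed by the factor $s=|s|e^{i\theta}$ after rotation. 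With the off-diagonal block gone, a direct computation using $\cos\theta=\sigma/|s|$ and $\mathrm{Re}(e^{-i\theta}s^2)=\sigma|s|$ gives
\[
\mathrm{Re}\bigl[e^{-i\theta}\langle \tilde{\mathbf A}_\Omega(s)\mathbf{U},\overline{\mathbf{U}}\rangle\bigr]=\tfrac{\sigma}{\rho_0|s|}\triple{\mathbf{U}}^2_{|s|,\Omega},\qquad \mathrm{Re}\bigl[e^{-i\theta}\langle A_{\Omega^c}(s)u,\overline{u}\rangle\bigr]=\tfrac{\sigma}{|s|}\triple{u}^2_{|s|,\Omega^c},
\]
so that
\[
\mathrm{Re}\bigl[e^{-i\theta}\mathcal{B}((\mathbf{U},u),(\mathbf{U},u))\bigr]\ \ge\ c\,\frac{\sigma}{|s|}\Bigl(\triple{\mathbf{U}}^2_{|s|,\Omega}+\triple{u}^2_{|s|,\Omega^c}\Bigr).
\]
Combining with (4.25)--(4.26) this yields coercivity on $\pmb{\mathscr{H}}$ with constant of order $\sigma\underline\sigma^2/|s|$, and Lax--Milgram delivers existence and uniqueness.

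For the bound (4.27), I would substitute $(\mathbf{V},v)=(\mathbf{U},u)$ into (4.22) and estimate the right-hand side by
\[
|\mathcal{F}(\mathbf{U},u)|\le\|\hat d_1\|_{(\mathbf{H}^1(\Omega))'}\|\mathbf{U}\|_{\mathbf{H}^1(\Omega)}+\|\hat d_2\|_{H^{-1/2}(\Gamma)}\|\gamma^+u\|_{H^{1/2}(\Gamma)}\le \frac{C}{\underline\sigma}\,\|(\hat d_1,\hat d_2,0)\|_{X'}\Bigl(\triple{\mathbf{U}}^2_{|s|,\Omega}+\triple{u}^2_{|s|,\Omega^c}\Bigr)^{1/2},
\]
where the trace theorem and (4.25)--(4.26) are used to convert $H^1$-norms into the $|s|$-weighted norms at the cost of a factor $\underline\sigma^{-1}$. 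Combining with the coercivity lower bound and dividing through yields (4.27) with $c(\sigma,\underline\sigma)\sim 1/(\sigma\underline\sigma)$.

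The main obstacle is careful book-keeping of the $s$-dependence. Conceptually the cancellation of the coupling block is clean, but it must be checked against the precise bilinear (not sesquilinear) duality used on $\Gamma$, and one must track how each factor of $\sigma$, $\underline\sigma$, and $|s|$ enters, because the final estimate must have the exact shape required by Proposition~3.3 and Theorem~\ref{th:3.5} for the time-domain results of Section~5. Everything else (continuity, positivity of $\tilde{\mathbf A}_\Omega$ and $A_{\Omega^c}$) is routine once the cancellation is established.
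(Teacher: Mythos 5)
Your proposal is correct and follows essentially the same route as the paper: the paper's proof is precisely your coercivity identity (the skew-symmetric coupling terms cancel after multiplying by $e^{-i\theta}$ and taking real parts, leaving $\frac{\sigma}{|s|}\bigl(|||\mathbf{U}|||^2_{|s|,\Omega}+|||u|||^2_{|s|,\Omega^c}\bigr)$), followed by testing with $(\mathbf{U},u)$, Cauchy--Schwarz on the data functional, and the norm equivalences (4.25)--(4.26) to arrive at $c(\sigma,\underline\sigma)=c_0/(\sigma\underline\sigma)$. The only difference is presentational: you make the Lax--Milgram framing and the continuity of the form explicit, which the paper leaves implicit.
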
 
 \begin{proof} 
 The existence and uniqueness results follow immediately from the identity 
 \begin{eqnarray}
Re \Big\{e^{-i\theta}\Big( \langle \tilde{\mathbf A}_{ \Omega}(s)\mathbf{U},\overline{ \mathbf{U}}\rangle & + & \langle A_{ \Omega^c}(s)\,u,\overline{ u}\rangle 
 + s\{ \langle \gamma^{\prime}((\gamma^+ u)\mathbf{n}), \overline{\mathbf U}\rangle - \langle  
 \gamma\mathbf{U}\cdot \mathbf{n}, \overline{\gamma^+ u} \rangle\} \Big)\Big\}  \nonumber\\ 
&=& Re \Big\{e^{-i\theta}\Big( \langle \tilde{\mathbf A}_{  \Omega}(s)\mathbf{U},\overline{ \mathbf{U}}\rangle  +  \langle A_{ \Omega^c}(s)\,u,\overline{ u}\rangle  \Big)\Big\}   \nonumber \\
&=&\frac{\sigma}{|s|} \Big(|||\mathbf{U} |||^2_{|s|, \Omega} +|||u|||^2_{|s|, \Omega^c}\Big). \label{eq:4.28}
\end{eqnarray}
For the estimate (\ref{eq:4.27}),  it follows from  (\ref{eq:4.28}) and (\ref{eq:4.22}) that
\begin{eqnarray*}
\frac{\sigma}{|s|} \Big(|||\mathbf{U} |||^2_{|s|, \Omega} +|||u|||^2_{|s|, \Omega^c}\Big)
&\leq& \Big|\langle \hat d_1, \overline{{\mathbf{U}}}\rangle + 
\langle \hat d_2, \overline{\gamma^+ {u}}\rangle\Big|\\
 &\leq&c_1 \Big\{ ||\hat d_1||_{({H^1(\Omega)})^\prime} |||{ \mathbf{U}}|||_{1, \Omega} 
 + ||\hat d_2||_{H^{-1/2}( \Gamma)} |||{ u}|||_{1, \Omega^c}\Big\}\\
&\leq& c_1\Big\{ |||{ \mathbf{U}}|||^2_{1, \Omega} + |||{ u}|||^2_{1, \Omega^c} \Big\}^{1/2} 
 \Big\{ ||\hat d_1||^2_{(H^1(\Omega))^\prime} + ||\hat d_2||^2_{H^{-1/2}(\Gamma)}\Big\}^{1/2}. 
 \end{eqnarray*}
 Consequently, we have the estimate 
 \begin{equation}\label{eq:4.29}
\Big\{|||\mathbf{U}|||^2_{|s|, \Omega} + |||u|||^2_{|s|, \Omega^c} \Big\}^{1/2}
\leq c(\sigma, \underline{\sigma})\; |s| \;||(\hat d_1,\hat d_2,0)||_{X^\prime},
\end{equation}
where $c(\sigma, \underline\sigma)= c_0/{\sigma \underline\sigma}$ with a constant $c_0$
independent of $s$ and $\sigma$.  In deriving the estimate (\ref{eq:4.29}), we 
 have tacitly applied the relations (\ref{eq:4.25}) and (\ref{eq:4.26}).
\end{proof}

As we will see  the  estimate (\ref{eq:4.29}) will lead us to show that the inverse of the operator
$\pmb{\mathscr{A}} $ in (\ref{eq:4.12}) belongs to the appropriate class  $\mathcal{A}(\mu, X,Y)$. 
In fact,  the following theorem holds for the operator $\pmb{\mathscr{A}}$ of (\ref{eq:4.13}). 
\begin{theorem} \label{th:4.2}
Let $ X =\mathbf{ H}^1(\Omega) \times H^{1/2}(\Gamma) \times H^{-1/2}(\Gamma) $ with its dual $ X^\prime =(\mathbf{ H}^1(\Omega))^\prime  \times H^{-1/2}(\Gamma) \times H^{1/2}(\Gamma) $
and with  $X^\prime_0 := \{ (\hat d_1, \hat d_2 ,\hat d_3) \in X^\prime\,  \big | \,  \hat  d_3 = 0\}$.  Then we have 
$$ \pmb{\mathscr{A}}  \in \mathcal{A}(2, X,X^\prime) \quad \mbox{and} \quad 
 \pmb{\mathscr{A}}^{-1}|_{X_0'} \in  \mathcal{A}(3/2 , X_0^\prime, X) .$$
 \end{theorem}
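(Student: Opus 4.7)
The statement has two parts; I treat them in turn.

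For $\pmb{\mathscr{A}}\in\mathcal{A}(2,X,X')$, analyticity of each entry of the $3\times 3$ operator matrix in $s\in\mathbb{C}_+$ is routine. For the polynomial growth, I would examine each entry separately. The sesquilinear form defining $\tilde{\mathbf A}_\Omega(s)$ contains the mass term $\rho_e s^2\int_\Omega\mathbf U\cdot\mathbf V\,dx$, so $\|\tilde{\mathbf A}_\Omega(s)\|\lesssim |s|^2$ as a map $\mathbf H^1(\Omega)\to (\mathbf H^1(\Omega))'$. The two coupling entries $\pm s(\mathbf n^\top\gamma)'$ and $\mp s(\mathbf n^\top\gamma)$ scale linearly in $|s|$. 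The four boundary integral operators $V,K,K',W$ are analytic with polynomial-in-$|s|$ bounds; the bound for $V$ is recorded in \S\ref{s:3.4}, and the analogous bounds for $K,K',W$ follow from the same type of analysis in \cite{LaSa:2009b, Sa:2012}, giving growth at most $|s|^2$. Taking the maximum across entries yields $\pmb{\mathscr{A}}\in\mathcal{A}(2,X,X')$.

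For $\pmb{\mathscr{A}}^{-1}|_{X_0'}\in\mathcal{A}(3/2,X_0',X)$, I would route through the variational problem (\ref{eq:4.22}). Fix $\hat{\mathbf d}=(\hat d_1,\hat d_2,0)\in X_0'$. Theorem \ref{th:4.1} produces a unique $(\mathbf U,u)\in\pmb{\mathscr{H}}$ with
\[
\bigl(|||\mathbf U|||_{|s|,\Omega}^2+|||u|||_{|s|,\Omega^c}^2\bigr)^{1/2}\le \frac{c_0|s|}{\sigma\underline\sigma}\,\|\hat{\mathbf d}\|_{X'},
\]
and the equivalence argument of Section 4.1 shows that $(\mathbf U,\gamma^+u,\partial_n^+u)$ is the unique solution of (\ref{eq:4.12}) in $X$. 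The bound (\ref{eq:4.25}) gives $\|\mathbf U\|_{\mathbf H^1(\Omega)}\le \underline\sigma^{-1}|||\mathbf U|||_{|s|,\Omega}$, so $\mathbf U$ contributes growth $|s|$. For $\hat\phi=\gamma^+u$, I would apply the interpolation-style trace inequality
\[
\|\gamma u\|_{H^{1/2}(\Gamma)}^2\lesssim \|u\|_{L^2(\Omega^c)}\|\nabla u\|_{L^2(\Omega^c)}+\|u\|_{L^2(\Omega^c)}^2,
\]
combined with $\|u\|_{L^2}\le (c/|s|)|||u|||_{|s|}$ and $\|\nabla u\|_{L^2}\le |||u|||_{|s|}$ from the definition of the $|s|$-weighted norm; this yields $\|\gamma^+u\|_{H^{1/2}(\Gamma)}\le C\underline\sigma^{-1/2}|s|^{-1/2}|||u|||_{|s|,\Omega^c}$, hence growth $|s|^{1/2}$ for $\hat\phi$. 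For $\hat\lambda=\partial_n^+u$ I would use the dual estimate: writing $\langle\partial_n^+u,\xi\rangle=a_{s,\Omega^c}(u,w)$ for any $H^1$-extension $w$ of $\xi$, and choosing $w$ so that $|||w|||_{|s|,\Omega^c}\le C\underline\sigma^{-1/2}|s|^{1/2}\|\xi\|_{H^{1/2}(\Gamma)}$ (the lifting dual to the trace bound), one obtains $\|\partial_n^+u\|_{H^{-1/2}(\Gamma)}\le C\underline\sigma^{-1/2}|s|^{1/2}|||u|||_{|s|,\Omega^c}$, so $\hat\lambda$ contributes growth $|s|^{3/2}$, the dominant exponent. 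Analyticity of $\pmb{\mathscr{A}}^{-1}|_{X_0'}$ is inherited from analyticity of $\pmb{\mathscr{A}}$ together with pointwise invertibility coming from the strong ellipticity (\ref{eq:4.14}).

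The principal obstacle is the $|s|^{-1/2}$ saving in the two trace-type estimates. Without it, a direct use of the continuous trace theorem gives growth $|s|^2$ for $\hat\lambda$ and the wrong exponent $\mu=2$ instead of $\mu=3/2$. The required sharpened inequalities exploit the interpolation structure of the trace theorem together with the built-in $|s|$-weighting of $|||\cdot|||_{|s|}$, as developed in the Laliena--Sayas framework \cite{LaSa:2009b, Sa:2012}; the remaining work is verifying that they transport cleanly through the equivalence between the coupled variational problem (\ref{eq:4.22}) and the nonlocal system (\ref{eq:4.12}).
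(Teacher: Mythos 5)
Your route for the second (and only nontrivially proved) assertion is essentially the paper's: pass to the variational problem \eqref{eq:4.22}, invoke Theorem~\ref{th:4.1} and the equivalence with \eqref{eq:4.12}, read off $\mathbf U$ via \eqref{eq:4.25}, and recover $\hat\lambda=\partial_n^+u$ by duality against an $s$-dependent lifting $\tilde v$ with $|||\tilde v|||_{|s|,\Omega^c}\le C(|s|/\underline\sigma)^{1/2}\|v\|_{H^{1/2}(\Gamma)}$. That lifting bound (from \cite{BaHa:1986a}) is indeed the crux producing the exponent $3/2$, and you have it. The paper does not prove the first assertion at all, so your entrywise bounds there are extra but consistent with what is used later in \eqref{eq:5.1}.

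One step as written would fail, however: the inequality $\|\gamma u\|_{H^{1/2}(\Gamma)}^2\lesssim\|u\|_{L^2(\Omega^c)}\|\nabla u\|_{L^2(\Omega^c)}+\|u\|_{L^2(\Omega^c)}^2$ is not true. The multiplicative trace inequality of this form holds for the $L^2(\Gamma)$ norm of the trace, not for the $H^{1/2}(\Gamma)$ norm: your version would assert boundedness of the trace from an interpolation space of order $1/2$ between $L^2(\Omega^c)$ and $H^1(\Omega^c)$ into $H^{1/2}(\Gamma)$, whereas the trace on $H^{1/2}(\Omega^c)$ does not even reach $L^2(\Gamma)$. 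So there is no $|s|^{-1/2}$ saving for $\hat\phi$. Fortunately none is needed: the paper simply uses the plain trace bound $\|\hat\phi\|_{H^{1/2}(\Gamma)}\le c_1|||u|||_{1,\Omega^c}\le c_1\underline\sigma^{-1}|||u|||_{|s|,\Omega^c}$ as in \eqref{eq:4.30}, which together with \eqref{eq:4.29} gives growth $|s|$ for $\hat\phi$, still dominated by the $|s|^{3/2}$ coming from $\hat\lambda$. Your closing claim that the $|s|^{-1/2}$ saving is needed in \emph{both} trace-type estimates should therefore be corrected: only the Neumann datum requires the refined lifting, and with that correction your argument matches the paper's.
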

\begin{proof}
We will only prove $ \pmb{\mathscr{A}}^{-1}|_{X_0'} \in  \mathcal{A}(3/2 , X_0^\prime, X)$.  We remark that in the proof,  for simplicity, we will replace  the norm  $|| \cdot ||_{H^1(\Omega)} $ of $\mathbf{ H}^1(\Omega)$   by its equivalent norm  $ ||| \cdot|||_{1, \Omega}$.
It was pointed out in  (\ref{eq:4.20}) that 
$$
\gamma^+u=[\gamma u] =\hat\phi \in H^{1/2}(\Gamma), \quad \partial_n^+ u= [\partial_n u] =\hat\lambda \in H^{-1/2}(\Gamma). 
 $$
 Then we see that
\begin{equation}\label{eq:4.30}
||\hat\phi ||^2_{H^{1/2}(\Gamma)} = ||\gamma^+ u||^2_{H^{1/2}(\Gamma)}
\leq c_1 |||u|||^2_{1, \Omega^c}
\leq c_1 \frac{1}{\underline\sigma^2} |||u|||^2_{|s|,  \Omega^c}
\end{equation}
Similarly, we have 
$$
||\hat\lambda||^2_{H^{-1/2}(\Gamma)} =  || \partial^+_n u ||^2_{H^{-1/2}(\Gamma)} . $$
For $v \in H^{1/2}(\Gamma)$,  let $\tilde v $ be its extension as an $ H^1(\Omega^c)$ solution of 
$ - \Delta \tilde v + |s|^2/ c^2\, \tilde v = 0 \quad \mbox{in}\quad  \Omega^c$.
Then  by the generalized Green's first theorem (see , e.g., \cite{HsWe:2008} ) we have 
\begin{eqnarray*}
\langle \partial^+_n u, v\rangle &=&- \langle  A_{\Omega^c}(s)\; u, \tilde v \rangle \leq  |||u|||_{|s|, \Omega^c}  |||\tilde v|||_{|s|, \Omega^c}\\
&\leq& c_2 \;max\{ 1, |s|^{1/2} \} \;||v||_{H^{1/2}(\Gamma)} |||u|||_{|s|, \Omega^c}\\
&\leq& c_2 \Big( \frac{|s|}{\underline\sigma} \Big)^{1/2}\, ||v||_{H^{1/2}(\Gamma)} |||u|||_{|s|, \Omega^c},
\end{eqnarray*}
where on the right hand side, we have tacitly used the result in  \cite{BaHa:1986a}
(see also \cite{LaSa:2009a})   for  estimating $\tilde v$.
Hence, 
\begin{equation*} 
||\partial^+_n u ||_{H^{-1/2}(\Gamma)} : = sup  \frac{\langle \partial^+_n u, v\rangle }{  ||v||_{H^{1/2}(\Gamma) } } 
\leq c_2\Big( \frac{|s|} {\underline\sigma} \Big)^{1/2} |||u|||_{|s|, \Omega^c}.
\end{equation*}
Thus, we have the estimates
\begin{equation} \label{eq:4.31}
||\hat\lambda||^2_{H^{-1/2}(\Gamma)} \leq  c_2^2 \frac{|s|} {\underline\sigma} 
 |||u|||^2_{|s|, \Omega^c}.
\end{equation}
From (\ref{eq:4.30}) and (\ref{eq:4.31}),   we obtain the estimates
\begin{equation}\label{eq:4.32}
\frac{1}{2} \Big\{ c_1 \underline{\sigma}^2  ||\hat\phi ||^2_{H^{1/2}(\Gamma} + 
 \frac {\underline\sigma}{c_2^2|s|}  |\hat\lambda||^2_{H^{-1/2}(\Gamma)}\Big\}
 \leq 
 |||u|||^2_{|s|, \Omega^c}.
 \end{equation}
 As a consequence of (\ref{eq:4.29}),  it follows that 
 \begin{eqnarray*}
 \Big\{ \underline\sigma^2 ||| \mathbf{U}|||^2_{1, \Omega} + 
 \frac{1}{2} \Big(c_1\, \underline{\sigma}^2  ||\hat\phi ||^2_{H^{1/2}(\Gamma} + 
 \frac {\underline\sigma}{c_2^2|s|}  ||\hat\lambda||^2_{H^{-1/2}(\Gamma)}\Big)\Big\}&\leq& 
\Big\{  c(\sigma, \underline{\sigma}) |s|  ||(\hat d_1, \hat d_2,  0)||_{X^{\prime}}\Big\}^2.
 \end{eqnarray*}
However,  we see that 
\begin{eqnarray*}
LHS &\geq&\frac{1}{2} \frac{\underline\sigma}{|s|} \Big\{ \underline\sigma^2 ||| \mathbf{U}|||^2_{1, \Omega} + c_1
 \underline{\sigma}^2  ||\hat\phi ||^2_{H^{1/2}(\Gamma)} + \frac{1}{c_2^2} ||\hat\lambda||^2_{H^{-1/2}(\Gamma)}\Big\}\\
 &\geq&\frac{1}{2} \frac{\underline\sigma^3}{|s|} \Big\{ ||| \mathbf{U}|||^2_{1, \Omega} + c_1
  ||\hat\phi ||^2_{H^{1/2}(\Gamma)} + \frac{1}{c_2^2} ||\hat\lambda||^2_{H^{-1/2}(\Gamma)}\Big\}\\
\end{eqnarray*}
This implies that 
$$ \Big\{ ||| \mathbf{U}|||^2_{1, \Omega} + 
  ||\hat\phi ||^2_{H^{1/2}(\Gamma)} +  ||\hat\lambda||^2_{H^{-1/2}(\Gamma)}\Big\}^{1/2}
  \leq c_0(\sigma, \underline\sigma) |s|^{3/2} ||(\hat d_1,\hat d_2, 0)||_{X^\prime}, 
 $$
 where  
 $$c_0(\sigma, \underline\sigma)= \frac{C}{\sigma \underline\sigma^{5/2}}$$
 with constant $C$ independent of $s$ and $\sigma$.
Or, we have 
\begin{equation}\label{eq:4.33}
||(\mathbf{U}, \hat\phi, \hat\lambda)||_X \leq c_0(\sigma, \underline\sigma)\; |s|^{3/2} ||(\hat d_1,\hat d_2, 0)||_{X^\prime}
\end{equation}
which is  the desired result:   $\pmb{\mathscr{A}}^{-1}|_{X_0'} \in  \mathcal{A}(3/2 , X_0^\prime, X)$ .
\end{proof}

In view of (\ref{eq:4.4}), we see that $\mathbf{U}$ and $\Phi$ are solutions of the system 
 \begin{equation}\label{eq:4.34}
\begin{pmatrix}
\mathbf{U} \\[3mm]
\Phi\\
\end{pmatrix}
= \left (
\begin{matrix}  
      I & 0 & 0\\[3mm]
      0 & D & -S  \\
    \end{matrix}
    \right ) 
\pmb{\mathscr{A}}^{-1} 
    \begin{pmatrix}
\hat{d_1} \\[3mm]
\hat{d_2}\\[3mm]
{0}\\
\end{pmatrix}.
 \end{equation}
As a consequence of  Theorems~\ref{th:4.1} and \ref{th:4.2} ,  we have the following corollary.
\begin{corollary}\label{co:4.3} The matrix of operators 
\begin{equation}\label{eq:4.35}
 \left (
\begin{matrix}  
      I & 0 & 0\\[3mm]
      0 & D & -S  \\
    \end{matrix}
    \right )
 \pmb{\mathscr{A}}^{-1}|_{X_0'}  \quad \mbox{belongs to the class} \quad  \mathcal{A}(1 , X_0^\prime, \mathbf{H}^1(\Omega) \times H^1(\Omega^c)).
\end{equation}
\end{corollary}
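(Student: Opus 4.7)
The plan is to avoid routing the bound through the boundary data $\hat\phi,\hat\lambda$ (which is what produced the $|s|^{3/2}$ exponent in Theorem~\ref{th:4.2}), and instead read the estimate for the potential $u=D\hat\phi-S\hat\lambda$ directly from the volume variational bound \eqref{eq:4.27}. By the construction leading to \eqref{eq:4.15}--\eqref{eq:4.20}, the pair $(\mathbf{U},u)\in \mathbf{H}^1(\Omega)\times H^1(\Omega^c)$ obtained by applying the matrix in \eqref{eq:4.35} to data $(\hat d_1,\hat d_2,0)\in X_0'$ is precisely the unique solution of the variational problem \eqref{eq:4.22}: the first component is the elastic displacement, and the second is the restriction to $\Omega^c$ of $D(s)\hat\phi - S(s)\hat\lambda$, which vanishes on $\Omega$ by uniqueness for the homogeneous Dirichlet problem for \eqref{eq:4.16}.

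The first main step is to invoke \eqref{eq:4.27}, which already supplies
\[
\bigl\{|||\mathbf{U}|||^2_{|s|,\Omega}+|||u|||^2_{|s|,\Omega^c}\bigr\}^{1/2}\le c(\sigma,\underline\sigma)\,|s|\,\|(\hat d_1,\hat d_2,0)\|_{X'},
\]
with $c(\sigma,\underline\sigma)=c_0/(\sigma\underline\sigma)$. Converting the $|s|$-weighted norms into the unweighted $\mathbf{H}^1(\Omega)$- and $H^1(\Omega^c)$-norms costs only an extra factor $1/\underline\sigma$ by the equivalences \eqref{eq:4.25}--\eqref{eq:4.26}, giving
\[
\bigl\{|||\mathbf{U}|||^2_{1,\Omega}+|||u|||^2_{1,\Omega^c}\bigr\}^{1/2}\le \frac{C}{\sigma\,\underline\sigma^{2}}\,|s|\,\|(\hat d_1,\hat d_2,0)\|_{X'_0}.
\]
The constant prefactor is non-increasing in $\sigma$ and behaves as $\sigma^{-3}$ as $\sigma\to 0^+$, so it fits the polynomial-growth requirement in the definition of the class $\mathcal{A}(\mu,X,Y)$. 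The exponent in $|s|$ is exactly $1$, which is the desired $\mu$.

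The second main step is analyticity. By Theorem~\ref{th:4.2}, the map $s\mapsto \pmb{\mathscr{A}}(s)^{-1}|_{X'_0}$ is analytic as an $L(X'_0,X)$-valued function on $\mathbb{C}_+$. The potentials $D(s)$ and $S(s)$ defined by \eqref{eq:4.5}--\eqref{eq:4.6} depend analytically on $s\in\mathbb{C}_+$ through the fundamental solution $E_{s/c}(x,y)$, and act continuously into $H^1(\Omega^c)$. The composition of analytic operator-valued functions is therefore analytic with values in $L(X'_0,\mathbf{H}^1(\Omega)\times H^1(\Omega^c))$, yielding membership in the class $\mathcal{A}(1,X'_0,\mathbf{H}^1(\Omega)\times H^1(\Omega^c))$.

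The only nontrivial point is why this is a genuine improvement over Theorem~\ref{th:4.2}, which would naively give $\mu=3/2$ since $D,S$ act continuously with $s$-independent norms. The gain of $1/2$ comes from the fact that the proof of Theorem~\ref{th:4.2} spends an extra factor $(|s|/\underline\sigma)^{1/2}$ in \eqref{eq:4.31} to recover the normal derivative $\hat\lambda=\partial_n^+u$ from $u$ via the extension argument of Bamberger--Ha Duong. For the output $u$ itself, no such boundary recovery is needed, so the exponent drops back from $3/2$ to $1$; this is the key observation that makes the corollary sharper than a direct composition estimate.
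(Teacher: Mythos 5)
Your proof is correct and takes essentially the same route as the paper: both read the bound directly off the variational estimate \eqref{eq:4.27}, identifying the output of the operator matrix applied to $(\hat d_1,\hat d_2,0)$ with the pair $(\mathbf{U},u)$ solving \eqref{eq:4.22} (so that the second component is $\Phi=D\hat\phi-S\hat\lambda$ restricted to $\Omega^c$), with the passage to unweighted norms via \eqref{eq:4.25}--\eqref{eq:4.26} affecting only the $\sigma$-dependent constant. One minor slip in your closing discussion: $D(s)$ and $S(s)$ are \emph{not} $s$-uniformly bounded into $H^1(\mathbb{R}^3\setminus\Gamma)$ --- the paper places them in $\mathcal{A}(3/2,H^{1/2}(\Gamma),H^1(\mathbb{R}^3\setminus\Gamma))$ and $\mathcal{A}(1,H^{-1/2}(\Gamma),H^1(\mathbb{R}^3\setminus\Gamma))$ respectively, so the naive composition with Theorem~\ref{th:4.2} yields index $3/2+3/2=3$ rather than $3/2$; this does not affect your main argument, which correctly bypasses the composition altogether.
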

\begin{proof}
We note that from (\ref{eq:4.27}), we have 
 \[
\Big\{ |||\mathbf{U}|||^2_{|s|, \Omega} + |||\Phi|||^2_{|s|, \Omega^c}\Big\}^{1/2}
= \Big\{ |||\mathbf{U}|||^2_{|s|, \Omega} + |||u|||^2_{|s|, \Omega^c}\Big\}^{1/2}
\leq c(\sigma, \underline{\sigma}) |s|\; ||(\hat d_1, \hat d_2,  0)||_{X^{\prime}}\;.
\]
\end{proof}

It should be mentioned  that for $\hat\phi \in H^{-1/2}(\Gamma)$, if  $ u= D(s) \hat\phi $ in $\mathbb{R}^3 \setminus \Gamma$, then 
\begin{eqnarray*} 
\frac{\sigma}{|s|} |||u|||^2_{|s|, \mathbb{R}^3\setminus \Gamma}&=& Re\; \Big\{ e^{-i \theta} \langle W\hat\phi, \overline{\hat\phi} \rangle \Big\}\\
&\leq& ||W\hat\phi||_{H^{-1/2}(\Gamma)} ||\hat\phi||_{H^{1/2}(\Gamma)} \\
&\leq &c_1\Big(\frac{|s|}{\underline\sigma}\Big)^{1/2} |||u|||_{|s|, \mathbb{R}^3 \setminus \Gamma}  ||\hat\phi||_{H^{1/2}(\Gamma)} .
\end{eqnarray*}
Hence from (\ref{eq:4.26}) we obtain that 
$$ ||D(s) \hat\phi||_{H^1( \mathbb{R}^3 \setminus \Gamma)} \leq  c_1 \frac{|s|^{3/2}}{\sigma {\underline\sigma}^{3/2}} ||\hat\phi||_{H^{1/2}(\Gamma)},$$
which implies $D \in 
 \mathcal{A}(3/2 , H^{/1/2}(\Gamma), H^1(\mathbb{R}^3 \setminus \Gamma)).
$
Similarly, we may show as in \cite{LaSa:2009b} that for $\hat \lambda$, if we set $ u= S(s) \hat\lambda $ in $\mathbb{R}^3 \setminus \Gamma$, then we can show that
$$||S(s) \hat\lambda||_{H^1(\mathbb{R}^3 \setminus \Gamma)} \leq c_2 \frac{|s|} {\sigma \underline\sigma^2}  ||\hat\lambda||_{H^{-1/2}(\Gamma)}, $$
and hence $S \in 
 \mathcal{A}(1 , H^{-1/2}(\Gamma), H^1(\mathbb{R}^3 \setminus \Gamma)).
$
This means 
\begin{equation*}
 \left (
\begin{matrix}  
      I & 0 & 0\\[3mm]
      0 & D & -S  \\
    \end{matrix}
    \right ) \in  \mathcal{A} (3/2 , X,  \mathbf{H}^1(\Omega)  
\times H^1(\Omega^c)). 
\end{equation*} 
Following \cite{LaSa:2009b}, if we apply the composition rule and Theorem~\ref{th:4.2}, we find  the matrix of operators in (\ref{eq:4.34}) ended  with an index $\mu = 3/2 + 3/2 = 3$. However, this only gives an upper bound for the actual index as in the Corollary\;\ref{co:4.3}.
\section{Main results in the time domain } 
With the properties of the solutions in the transformed domains available, we now return to the solutions in the time domain.  As the example in \S \ref{s:3.4}  properties of the solutions in the time domains may be  obtained by applying  the inversion formula( \ref{eq:3.2}) to the  relevant   operators in the  transformed domain.  

We recall that  the matrix of operators  $\pmb{\mathscr{A}} $
 belongs to the class $\mathcal{A}(2, X,X^\prime)$ with  $ X =\mathbf{ H}^1(\Omega) \times H^{1/2}(\Gamma) \times H^{-1/2}(\Gamma) $ 
(see Theorem \ref{th:4.2}).  Indeed, we see that 
\begin{eqnarray}\label{eq:5.1}
||\pmb{\mathscr{A}}(s)||  &= & 3^2 \max_{1 \leq i,j  \leq 3}  ||\mathbb{A}_{ij}(s)||  \nonumber\\
&=& 3^2\; || W(s)|| 
\leq c_0 \frac{|s|^2}{\sigma \underline\sigma},
\end{eqnarray}
where $c_0$ is a constant  independent of $s$ and $\sigma$. 
We now apply  Theorem  \ref{th:3.5} to
$\pmb{\mathscr{A}}$, 
 with $\mu =2, k=4$  and $ \varepsilon =1$, and writing $\mathbf a:=\mathcal L^{-1}\{\pmb{\mathscr{A}}\}$.

\begin{theorem}\label{th:5.1}
Let  $ X =\mathbf{ H}^1(\Omega) \times H^{1/2}(\Gamma) \times H^{-1/2}(\Gamma) $  and   
$\mathbf{g}(t) := \mathcal{L}^{-1} \{ ( {\mathbf U}, \hat \phi, \hat \lambda)^{\top} \}$. 
If $\mathbf{g} \in C^3([0, T], X)$ and $\mathbf{g}^{(4)}$  is integrable, then $
\mathbf a*\mathbf{g}$ belongs to $C([0, T], X^{\prime}) $  and 
\begin{equation} \label{eq:5.2}
||\mathbf a*\mathbf{g}(t)||_{X^{\prime}} \leq c_1 ~ t^2\, max\{1, t\} ~\int_0^t ||g^{(4)} (\tau)||_X\; d \tau.
\end{equation}
\end{theorem}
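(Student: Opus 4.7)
The plan is to obtain Theorem \ref{th:5.1} as a direct application of Theorem \ref{th:3.5} to the operator-valued symbol $\pmb{\mathscr{A}}$. By Theorem \ref{th:4.2}, $\pmb{\mathscr{A}} \in \mathcal{A}(2, X, X')$, which fixes $\mu = 2$. The recipe in Theorem \ref{th:3.5} then prescribes $k = \lfloor \mu + 2 \rfloor = 4$ and $\varepsilon = k - (\mu + 1) = 1$, which matches precisely the regularity hypothesis on $\mathbf{g}$ (namely $\mathbf{g} \in C^3$ with $\mathbf{g}^{(4)}$ integrable) stated in the theorem.

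The second ingredient is an explicit non-increasing function $C_{\mathcal A}(\sigma)$ such that $\|\pmb{\mathscr{A}}(s)\|_{L(X, X')} \leq C_{\mathcal A}(\sigma)\, |s|^{\mu}$. This is exactly what \eqref{eq:5.1} provides: among the nine blocks of $\pmb{\mathscr{A}}$ the hypersingular operator $W$ carries the worst growth and forces $C_{\mathcal A}(\sigma) = c_0/(\sigma\,\underline\sigma)$. A quick check confirms this function is non-increasing on $(0, \infty)$ and satisfies $C_{\mathcal A}(\sigma) \leq c_0/\sigma^2$ on $(0, 1]$, so the quantitative side of the definition of the class $\mathcal{A}(\mu, X)$ is satisfied.

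The last step is purely bookkeeping. Evaluating $C_{\mathcal A}$ at $\sigma = t^{-1}$ and splitting into the two cases $t \leq 1$ and $t > 1$ gives $\min\{1, t^{-1}\} = 1$ in the first case and $\min\{1, t^{-1}\} = t^{-1}$ in the second, yielding $C_{\mathcal A}(t^{-1}) = c_0\, t\, \max\{1, t\}$. Substituting this, together with $\varepsilon = 1$ and $k = 4$, into the conclusion \eqref{eq:3.4} of Theorem \ref{th:3.5} produces \eqref{eq:5.2} with $c_1 = c_\varepsilon c_0$, while causality and the continuity of $\mathbf{a} * \mathbf{g}$ in $C([0, T], X')$ are already part of the conclusion of Theorem \ref{th:3.5}.

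I do not foresee any substantive obstacle: all the analytic work is packaged into Theorem \ref{th:3.5} on one side and into the frequency-domain bound \eqref{eq:5.1} on the other. The only points requiring small care are the correct identification of the triple $(\mu, k, \varepsilon)$, the verification that $C_{\mathcal A}$ indeed satisfies the structural conditions for membership in $\mathcal{A}(\mu, X)$, and the two-case evaluation of $C_{\mathcal A}(t^{-1})$ that produces the $\max\{1, t\}$ factor in \eqref{eq:5.2}.
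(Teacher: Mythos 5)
Your proposal is correct and follows exactly the paper's own route: the paper likewise derives \eqref{eq:5.2} by applying Theorem \ref{th:3.5} to $\pmb{\mathscr{A}}$ with $\mu=2$, $k=4$, $\varepsilon=1$, using the bound \eqref{eq:5.1} to get $C_{\mathcal A}(t^{-1})=c_0\,t\max\{1,t\}$ and hence the factor $t^2\max\{1,t\}$. No discrepancies to report.
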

\noindent 
Similarly, by applying Theorem  \ref{th:3.5} to $\pmb{\mathscr{A}}^{-1}|_{X_0'}$
with $\mu =3/2,\; k=3$  and $ \varepsilon =1/2$, the following theorem holds.
\begin{theorem}\label{th:5.2}
Let  $ X =\mathbf{ H}^1(\Omega) \times H^{1/2}(\Gamma) \times H^{-1/2}(\Gamma) $  and $\mathbf{d}(t) := \mathcal{L}^{-1} \{ ( \hat{d}_1, \hat{d}_2, 0)^{\top} \}$. 
If $\mathbf{d} \in C^2([0, T], X^\prime)$ and $||\mathbf{d}^{(3)}||_{X^\prime}$  is integrable, then $
(\mathbf{u}, \phi, \lambda)^{\top}$ belongs to $C([0, T], X) 
$  and 
\begin{equation}  \label{eq:5.3}
||\begin{pmatrix}
\mathbf{u} \\
\phi\\
\lambda
\end{pmatrix}(t)||_X \leq c_{1/2} ~ t^{ \frac{1}{2} +1}\; max \{1, t^{2 \frac{1}{2} }\} ~\int_0^t ||\mathbf{d}^{(3)} (\tau)||_{X^{\prime}}\; d \tau.
\end{equation}
\end{theorem}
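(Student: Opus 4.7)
The plan is to deduce Theorem~\ref{th:5.2} as a direct application of the causal-convolution inversion theorem (Theorem~\ref{th:3.5}) to the operator $\pmb{\mathscr{A}}^{-1}|_{X_0'}$, using the class membership established in Theorem~\ref{th:4.2}. The preparatory work has essentially been done in Section~4, so this last step is largely bookkeeping of indices and constants.

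First I would note that, by construction, in the Laplace-transform domain $(\mathbf{U},\hat\phi,\hat\lambda)^\top = \pmb{\mathscr{A}}^{-1}(\hat d_1,\hat d_2,0)^\top$, so after inverse Laplace transform
\[
(\mathbf{u},\phi,\lambda)^\top \;=\; \mathbf{a}^{-1} * \mathbf{d}, \qquad \mathbf{a}^{-1}:=\mathcal{L}^{-1}\{\pmb{\mathscr{A}}^{-1}|_{X_0'}\},
\]
where the convolution is causal because $\mathbf{d}$ is causal and the kernel inherits causality from the class $\mathcal{A}$. From Theorem~\ref{th:4.2} we have $\pmb{\mathscr{A}}^{-1}|_{X_0'}\in\mathcal{A}(3/2,X_0',X)$; explicitly (4.33) furnishes the symbol bound with $C_{A}(\sigma)=C/(\sigma\,\underline\sigma^{5/2})$.

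Next I would select the indices demanded by Theorem~\ref{th:3.5}: with $\mu=3/2$ one gets $k=\lfloor \mu+2\rfloor = 3$ and $\varepsilon=k-(\mu+1)=1/2\in(0,1]$. The regularity assumption of Theorem~\ref{th:5.2} — namely $\mathbf{d}\in C^{2}([0,T],X')$ with $\|\mathbf{d}^{(3)}\|_{X'}$ integrable — is precisely the hypothesis $g\in C^{k-1}$ and $\|g^{(k)}\|$ integrable required to apply Theorem~\ref{th:3.5}. Its conclusion yields $(\mathbf{u},\phi,\lambda)^\top\in C([0,T],X)$ together with the bound
\[
\|(\mathbf{u},\phi,\lambda)^\top(t)\|_X \;\leq\; c_{1/2}\,t^{1/2}\,C_A(t^{-1})\int_0^t \|\mathbf{d}^{(3)}(\tau)\|_{X'}\,d\tau.
\]

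Finally I would evaluate $C_A(t^{-1})$. With $\sigma=t^{-1}$ and $\underline\sigma=\min\{1,t^{-1}\}$, a split into the cases $t\le 1$ and $t>1$ gives $C_A(t^{-1})= C\,t\,\max\{1,t^{5/2}\}$. Multiplying by $t^{1/2}$ produces $c_{1/2}\,t^{3/2}\,\max\{1,t^{5/2}\}$, which is the factor appearing on the right-hand side of \eqref{eq:5.3}. Causality of $\mathbf{a}^{-1}*\mathbf{d}$ follows from Theorem~\ref{th:3.5}, and continuity in $t$ is guaranteed by the same theorem, completing the proof.

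There is no serious obstacle in this argument; the heavy lifting was carried out in the analysis of the Gaussian-elimination factorization \eqref{eq:4.13}, the coercivity identity \eqref{eq:4.28}, and the passage to the $\mathcal{A}(3/2,\cdot,\cdot)$ bound \eqref{eq:4.33}. The only point requiring care is matching the power of $\underline\sigma$ in $C_A$ with the exponent in $\max\{1,t^{5/2}\}$ when specializing to $\sigma=t^{-1}$, and verifying that the hypothesis on $\mathbf{d}^{(3)}$ matches the $k=3$ produced by $\mu=3/2$; both are mechanical once the value of $k$ is pinned down by Theorem~\ref{th:3.5}.
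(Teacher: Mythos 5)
Your proposal is correct and follows exactly the paper's route: the paper's proof of Theorem~\ref{th:5.2} is precisely the one-line application of Theorem~\ref{th:3.5} to $\pmb{\mathscr{A}}^{-1}|_{X_0'}\in\mathcal{A}(3/2,X_0',X)$ from Theorem~\ref{th:4.2}, with $k=3$, $\varepsilon=1/2$, and $C_A(\sigma)=C/(\sigma\,\underline\sigma^{5/2})$ from \eqref{eq:4.33}. Your evaluated factor $t^{3/2}\max\{1,t^{5/2}\}$ agrees with the stated bound once one reads the paper's ``$t^{2\frac{1}{2}}$'' as the mixed number $t^{5/2}$.
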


Finally in view of the Corollary \ref{co:4.3}, applying Theorem \ref{th:3.5} with $\mu =1, k = 3$ and $ \varepsilon =1 $,  the elastic and potential fields, $\mathbf{u}(x,t)$ and $\varphi(x,t)$ of the fluid-structure interaction  satisfy the estimates: 
\begin{theorem}\label{th:5.3}
Let  $ X =\mathbf{ H}^1(\Omega) \times H^{1/2}(\Gamma) \times H^{-1/2}(\Gamma) $  and    
$\mathbf{d}(t) := \mathcal{L}^{-1} \{ ( \hat{d}_1, \hat{d}_2, 0)^{\top} \}$. 
If $\mathbf{d} \in C^2([0, T], X^\prime)$ and $||\mathbf{d}^{(3)}||_{X\prime}$  is integrable, then $
(\mathbf{u}, \varphi)^{\top}$ belongs to $C([0, T], \mathbf{H}^1(\Omega) \times H^1(\Omega^c) ) 
$  and 
\begin{equation} \label{eq:5.4}
||\begin{pmatrix}
\mathbf{u} \\[3mm]
\varphi\\
\end{pmatrix}
(t)||_{\mathbf{H}^1(\Omega) \times H^1(\Omega^c)} \leq c_1 ~ t^2 \max \{1, t\} ~\int_0^t ||\mathbf{d}^{(3)} (\tau)||_{X^{\prime}}\; d \tau.
\end{equation}
\end{theorem}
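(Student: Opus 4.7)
The plan is to apply Theorem~\ref{th:3.5} (the convolution estimate from the Lubich approach) directly to the operator matrix whose Laplace transform is
\[
\mathcal{N}(s):=\begin{pmatrix} I & 0 & 0 \\ 0 & D(s) & -S(s) \end{pmatrix} \pmb{\mathscr{A}}^{-1}(s)\big|_{X_0'},
\]
and which, by Corollary~\ref{co:4.3}, belongs to the class $\mathcal{A}(1, X_0', \mathbf{H}^1(\Omega)\times H^1(\Omega^c))$. By formula (\ref{eq:4.34}), $(\mathbf{U},\Phi)^{\top}=\mathcal{N}(s)(\hat d_1,\hat d_2,0)^{\top}$ in the transformed domain, so taking inverse Laplace transforms, $(\mathbf{u},\varphi)^{\top}=n*\mathbf{d}$ with $n:=\mathcal{L}^{-1}\{\mathcal{N}\}$ and $\mathbf{d}:=\mathcal{L}^{-1}\{(\hat d_1,\hat d_2,0)^{\top}\}$.

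Next I invoke Theorem~\ref{th:3.5} with $\mu=1$, which forces $k=\lfloor\mu+2\rfloor=3$ and $\varepsilon=k-(\mu+1)=1$. Since the hypotheses of the statement guarantee $\mathbf{d}\in C^{2}([0,T],X')$ with $\|\mathbf{d}^{(3)}\|_{X'}$ integrable (and $\mathbf{d}$ is causal by construction of the Laplace transform), Theorem~\ref{th:3.5} produces $n*\mathbf{d}\in C([0,T],\mathbf{H}^1(\Omega)\times H^1(\Omega^c))$ together with the bound
\[
\|(\mathbf{u},\varphi)^{\top}(t)\|_{\mathbf{H}^1(\Omega)\times H^1(\Omega^c)} \;\le\; c_{1}\, t\, C_{\mathcal{N}}(t^{-1}) \int_0^t \|\mathbf{d}^{(3)}(\tau)\|_{X'}\, d\tau.
\]

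To obtain the stated form of the right-hand side I only need to unravel $C_{\mathcal{N}}(t^{-1})$, exactly as was done for the operator $V$ in Section~\ref{s:3.4} and for $\pmb{\mathscr{A}}$ in (\ref{eq:5.1}). Tracing the constant $c(\sigma,\underline\sigma)$ appearing in (\ref{eq:4.27}) through the equivalence of norms (\ref{eq:4.25})--(\ref{eq:4.26}) gives a bound of the form $C_{\mathcal{N}}(\sigma)\le C/(\sigma\,\underline\sigma^{q})$ for a fixed exponent $q\ge 0$, and substituting $\sigma=t^{-1}$ together with the elementary splits $\underline{t^{-1}}=t^{-1}$ for $t\ge 1$ and $\underline{t^{-1}}=1$ for $t<1$, the factor $t\,C_{\mathcal{N}}(t^{-1})$ becomes at most $c_{1}\,t^2\max\{1,t\}$, matching the growth appearing in (\ref{eq:5.4}).

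The only technical nuisance, and therefore the step I would spend the most care on, is the bookkeeping in this last paragraph: the operator $\mathcal{N}$ arises as a composition of $\pmb{\mathscr{A}}^{-1}|_{X_0'}$ (with its $|s|^{3/2}$ growth and $1/(\sigma\underline\sigma^{5/2})$ prefactor from Theorem~\ref{th:4.2}) and the potentials $D(s),\,S(s)$ (in the classes $\mathcal{A}(3/2,\cdot,\cdot)$ and $\mathcal{A}(1,\cdot,\cdot)$ computed at the end of Section~4). As remarked after Corollary~\ref{co:4.3}, naive composition would yield a conservative exponent $\mu=3$, but the direct energy estimate (\ref{eq:4.27}) already yields the sharper value $\mu=1$, and it is this sharper value that is needed so that $k=3$ and the required regularity is on $\mathbf{d}^{(3)}$ only, not $\mathbf{d}^{(4)}$ or higher. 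Once the sharp exponent is in hand the rest is routine, and the conclusion of Theorem~\ref{th:5.3} follows.
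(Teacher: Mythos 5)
Your proposal is correct and follows essentially the same route as the paper: identify $(\mathbf{U},\Phi)^{\top}$ via \eqref{eq:4.34} with the operator matrix of Corollary~\ref{co:4.3} in $\mathcal{A}(1,X_0',\mathbf{H}^1(\Omega)\times H^1(\Omega^c))$, apply Theorem~\ref{th:3.5} with $\mu=1$, $k=3$, $\varepsilon=1$, and substitute $\sigma=t^{-1}$ into the prefactor $C(\sigma)=c_0/(\sigma\underline\sigma)$ coming from \eqref{eq:4.27} to obtain $t^{\varepsilon}C(t^{-1})\le c_1 t^2\max\{1,t\}$. Your closing observation---that the sharp index $\mu=1$ must come from the direct energy estimate rather than from composing $\mathcal{A}(3/2,\cdot,\cdot)$ classes, since otherwise one would need $\mathbf{d}^{(4)}$ or higher---is exactly the point the paper makes after Corollary~\ref{co:4.3}.
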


We remark that Theorems \ref{th:5.1}-\ref{th:5.3} are mathematical foundations for the semi- and full-discretization  schemes based on the boundary element method and convolution quadrature method. We will pursue these investigations  in a separate communication. 

\begin{thebibliography}{99}

\bibitem{Ac:1990}
A. J.  Acheson.
\newblock {\em Elementary Fluid Dynamics}, Oxford Applied Mathematics and Computing Science Series.
\newblock Clarendon Press, Oxford, 1990. 

\bibitem{BaHa:1986a}
A.~Bamberger and T.~Ha Duong.
\newblock Formulation Variationnelle Espace-Temps pour le Calcul par Potentiel Retard'e de la Diffraction of d'une Onde Acoustique (I)
\newblock{\em Math. Meth. Appl.Sci}, 8(3): 405--435, 1986.

\bibitem{BaHa:1986b}
A.~Bamberger and T.~Ha Duong.
\newblock Formulation Variationnelle pour le Calcul de la Diffraction of d'une Onde Acoustique 
par une Surface Rigide
\newblock{\em Math. Meth. Appl.Sci}, 8(4): 598--608, 1986.

\bibitem{BiMa:1991}
J. Bielak and R.C. MacCamy.
\newblock Symmetric finite element and boundary integral coupling methods for fluid-solid interaction I.
\newblock{\em Quart. J. Appl. Math.   }, 49: 107--119, 1991.

\bibitem{Co:2004}
M.~Costabel.
\newblock Time-dependent problems with the boundary integral equation method.
Chapter 25,  703--721, Volume 1 {\em Fundamentals}. In 
 {\em Encyclopedia of Computational Mechanics}.
\newblock John Wiley \& Sons, Ltd. 2004. 

\bibitem{DaLi:1992}
R. Dautray and J.-L. Lions.
\newblock {\em Mathematical Analysis and Numerical Methods for Science and Technology}, 5
\newblock {\em Evolution Problems I}
\newblock Springer-Verlag Berlin Heidelberg 1992,  2000.

\bibitem{DoSa:2013}
V. Dom\'\i nguez and F.-J. Sayas.
\newblock Some properties of layer potentials and boundary integral operators for the wave equation.
\newblock{\em J. Int. Eqns. Applications}, 25: 253--294, 2013.

\bibitem{EsAn:1991}
O.von Estorff and H. Antes.
\newblock On FEM-BEM coupling for fluid-structure interaction analysis in the time domain 
\newblock{\em Int. J. Numer. Meth. Engng }, 31: 1151--168, 1991.

\bibitem{EvHe:1990}
 G.C. Everstine and F.M. Henderson.
 \newblock Coupled finite element/boundary element approach for fluid-structure interaction
 \newblock {J. Acoust. Soc. Amer.}, 87: 1938--1947, 1990

\bibitem{Fi:1972}
G.~ Fichera.
\newblock {\em Existence theorems in elasticity theory}.  347--389, Volume 2
\newblock{Handbuch der Physik}.
\newblock  Springer, 1972.

\bibitem{Ha:1989}
T. Ha-Duong.
\newblock On the transient acoustic scattering by a flat object.
\newblock {\em No.188, CMAP, Ecole polytechnique, French}


\bibitem{HJ:1985}
 M.A.~Hamdi, P.~Jean.
 \newblock A mixed functional for the numerical resolution of fluid-structure interaction problems.
\newblock In:{\em  Aero- and Hydro-Acoustic IUTAM Symposium, } G. Comte-Bellot  and J.E.  William (eds.)
\newblock  Springer-Verlag, 1985, 269--276.


\bibitem{HKS:1988}
G.C.~Hsiao, R.E.~Kleinman and L.S.~Schuetz.
\newblock On variational formulations of boundary value problems for fluid-solid interactions.
\newblock In: {\em  Elastic Wave Propagation  I.T.U.A.M. -I.U.P.A.P.
Symposium}, M.F. McCarthy and M.A. Hayes (eds.)
\newblock  Elsevier Science Publishers B.V. (North-Holland),   1989, 321--326,

\bibitem{HKR:2000}
G.C.~Hsiao, R.E.~Kleinman and G.F.~Roach.
\newblock Weak solutions of fluid-solid interaction problems.
\newblock{\em Math. Nachr.}, 218 :139--163, 2000

\bibitem{HsWe:2011a}
G. C. ~Hsiao and R. J.~ Weinacht.
\newblock A representation formula for the wave equation revoted.
 \newblock{\em Applicable Analysis: An international Journal}, 91(2): 371--380, 2012.

\bibitem{HeWe:2011b}
G.C.~Hsiao and R.J.~ Weinacht.
\newblock Transparent boundary conditions for the wave equation---a Kirchhoff point of view.
\newblock {\em Math. Meth. Appl.  Sci.}, doi: 10.1002/mma. 1524.

\bibitem{HsWe:2008}
G.C.~Hsiao and W.L.~Wendland.
\newblock{ \em Boundary Integral Equations},
\newblock{\em Applied Mathematical Sciences}, 164
\newblock Springer, Berlin, 2008.

\bibitem{LaSa:2009a}
A. R. Laliena and F.-J. Sayas.
\newblock A distributional version of Kirchhoff's formula.
\newblock {\em J. Math. Anal. Appl.}, 359: 197--208, 2009.

\bibitem{LaSa:2009b}
A. R.  Laliena and F.-J. Sayas.
\newblock Theoretical aspects of the application of convolution quadrature to scattering off acoustic waves.
\newblock {\em Numer. Math. }, 112: 637--678, 2009.

\bibitem{Lu:1988a}
Ch. Lubich .
\newblock Convolution quadrature and discretized operational calculus,  I.
\newblock {\em Numer. Math.}, 52(2):129--145, 1988.

\bibitem{Lu:1988b}
Ch. Lubich .
\newblock Convolution quadrature and discretized operational calculus,  II.
\newblock {\em Numer. Math.}, 52(4):413--425, 1988.

\bibitem{Lu:1994}
Ch. Lubich .
\newblock On the multistep time discretization of linear initial-boundary value problems and their
boundary integral equations.
\newblock {\em Numer. Math.}, 67(3):365--389, 1994.

\bibitem{LuSc:1992}
Ch. Lubich and R.~Schneider.
\newblock Time discretization of parabolic boundary integral equations.
\newblock {\em Numer. Math.}, 63(4):455--481, 1992.

\bibitem{LuMa:1995}
C.J. Luke and P.A. Martin.
\newblock Fluid-solid interaction: acoustic scattering by a smooth elastic obstacle
\newblock {\em SIAM J. Appl. Math. },  55 : 904--922, 1995.

\bibitem{Mi:1970}
C.~Miranda.
\newblock {\em Partial Differential Equations of Elliptic Type},
\newblock Springer,  Berlin, 1970.

\bibitem{PeBe:2009}
A. Pereira and G. Beer.
\newblock Fluid-structure interaction by a Duhamel-BEM/FEM coupling . In 
 {\em Recent Advances in Boundary Element Methods}, G. D. Manolis and D. Polyzos (eds)
\newblock Springer Science+Business Media B. V., 339--354, 2009. 

\bibitem{Sa:2012}
 F.-J. Sayas.
 \newblock Retarded potentials and time domain boundary integral equations: a road-map.
 \newblock{\em Unpublished lecture notes, University of Delaware}, March, 2013.
 
 \bibitem{Sch:2001}
 M. Schanz.
 \newblock Dynamic poroelasticity treated by a time domain boundary element method
\newblock{In: {\em IUTAM/ACM/IABEM Symposium on Advanced Mathematical and Computaional Mechanics Aspects of the Boundary Element Method}} T. Burczynski (ed.)
\newblock  Kluwer Academic Publishers,  303--314, 2001

 \bibitem{Sc:2008}
S. Schneider.
\newblock FE/FMBE coupling to model fluid-structure interaction
\newblock {\em Int. J. Numer. Meth. Engng }, 76: 2137--2156, 2008.

\bibitem{Se:1958}
J.~ Serrin.
\newblock {\em Mathematical Principles of Classical Fluid Mechanics}, 3--26, Volume 8/1
\newblock  Handbuch der Physik,
\newblock Springer, 1959.

\end{thebibliography}

\end{document}